\newcommand{\bD}{\mathbf D}
\newcommand{\bG}{\mathbf G}
\newcommand{ \cF}{{\cal F}}
\newcommand{\Nset}{\mathbb{N}}
\newcommand{\Rset}{\mathbb{R}}
\newcommand{\support}{\Gamma}
\newcommand{\genpol}{v}
\newcommand{\tarfun}{u}
\newcommand{\indmeas}{i}
\newcommand{\inddim}{j}
\newcommand{\indbasis}{k}
\newcommand{\truncation}{\tau}
\def\<{\langle}
\def\>{\rangle}
\newcommand{\be}{\begin{equation}}
\newcommand{\ee}{\end{equation}}
\newcommand{\beu}{\begin{equation*}}
\newcommand{\eeu}{\end{equation*}}
\newtheorem{lemma}{Lemma}
\newtheorem{theorem}{Theorem}
\newtheorem{remark}{Remark}
\DeclareMathOperator*{\argmin}{argmin}
\DeclareMathOperator*{\supp}{supp}
\newcommand{\E}{\mathbb{E}}
\renewcommand{\P}{\mathbb{P}}
\newcommand{\cA}{{\cal A}}
\newcommand{\cM}{{\cal M}}
\newcommand{\cR}{{\cal R}}
\newcommand{\cH}{{\cal H}}
\newcommand{\bI}{{\bf I}}
\newcommand{\bb}{{\bf b}}
\newcommand{\ba}{{\bf a}}
\newcommand{\paraone}{\theta_1}
\newcommand{\paratwo}{\theta_2}
\newcommand{\costgen}{C}
\def\t{\tilde}
\def\wt{\widetilde}
\def\wh{\widehat}
\def\cA{{\cal A}}
\def\cR{{\cal R}}
\def\cJ{{\cal J}}
\def\cM{{\cal M}}
\def\[{\Bigl [}
\def\]{\Bigr ]}
\def\({\Bigl (}
\def\){\Bigr )}
\def\[{\Bigl [}
\def\]{\Bigr ]}
\def\({\Bigl (}
\def\){\Bigr )}
\begin{document}

\setcounter{page}{1}
\title
{
Discrete least-squares approximations over optimized downward closed polynomial spaces in arbitrary dimension
\thanks
{ 
This research was supported by the Institut Universitaire de France, the ERC AdG BREAD, and the center for advanced modeling science (CADMOS).
}
}
\author{
Albert Cohen\thanks{Sorbonne Universit\'es, UPMC Univ Paris 06, CNRS, UMR 7598, Laboratoire Jacques-Louis Lions, 4, place Jussieu, Paris 75005, France. email: cohen@ann.jussieu.fr}
\and 
Giovanni Migliorati\thanks{Sorbonne Universit\'es, UPMC Univ Paris 06, CNRS, UMR 7598, Laboratoire Jacques-Louis Lions, 4, place Jussieu, Paris 75005, France. email: migliorati@ann.jussieu.fr} 
\and 
Fabio Nobile\thanks{MATHICSE-CSQI, \'Ecole Polytechnique F\'ed\'erale de Lausanne, Lausanne CH-1015, Switzerland. email: fabio.nobile@epfl.ch} 
}
\date{
\today
}

\maketitle

\begin{abstract}
\noindent
We analyze the accuracy of the discrete least-squares approximation of a function $\tarfun$ in multivariate polynomial spaces $\P_\Lambda:={\rm span} \{y\mapsto y^\nu \,: \, \nu\in \Lambda\}$ with $\Lambda\subset \Nset_0^d$ over the domain $\Gamma:=[-1,1]^d$, based on the sampling of this function at points $y^1,\dots,y^m \in \Gamma$. The samples are independently drawn according to a given probability density $\rho$ belonging to the class of multivariate beta densities, which includes the uniform and Chebyshev densities as particular cases. Motivated by recent results on high-dimensional parametric and stochastic PDEs, we restrict our attention to polynomial spaces associated  with \emph{downward closed} sets $\Lambda$ of \emph{prescribed} cardinality $n$, and we optimize the choice of the space for the given sample. This implies, in particular, that the selected polynomial space depends on the sample. We are interested in comparing the error of this least-squares approximation measured in $L^2(\Gamma,d\rho)$ with the best achievable polynomial approximation error when using downward closed sets of cardinality $n$. We establish conditions between the dimension $n$ and the size $m$ of the sample, under which these two errors are proven to be comparable. Our main finding is that the dimension $d$ enters only moderately in the resulting trade-off between $m$ and $n$, in terms of a logarithmic factor $\ln(d)$, and is even absent when the optimization is restricted to a relevant subclass of downward closed sets, named {\it anchored} sets. In principle, this allows one to use these methods in arbitrarily high or even infinite dimension. Our analysis builds upon \cite{CCMNT2013} which considered fixed and nonoptimized downward closed multi-index sets. Potential applications of the proposed results are found in the development and analysis of efficient numerical methods for computing the solution to high-dimensional parametric or stochastic PDEs, albeit not limited to this area.
\end{abstract}
\bigskip
{{\bf Keywords:} 
multivariate polynomial approximation,
discrete least squares, 
convergence rate, 
best $n$-term approximation, 
downward closed set.
} 
\bigskip

\noindent
{{\bf MSC:} 
41A10, 
41A25, 
41A50, 
41A63, 
65M70. 
}

\section{Introduction}
\label{sec:introduction}
\noindent
In recent years it has become clear that many interesting engineering applications feature an intrinsic dependence on a large number of parameters $y_1,\dots,y_d$, leading to a major concentration of efforts in the development and analysis of high-dimensional approximation methods. In many relevant situations, the parameters $y_\inddim$ are independent random variables distributed on intervals $I_\inddim$ according to probability measures $d\rho_\inddim$. We are then typically interested in approximating a function
\beu
y=(y_1,\dots,y_d)\mapsto \tarfun(y),
\eeu
depending on these parameters and measuring the error in $L^2(\Gamma,d\rho)$, where $\Gamma=I_1\times \cdots \times I_d$ and $d\rho=d\rho_1 \otimes \cdots \otimes d\rho_d$.
Up to a renormalization, we may assume that $I_\inddim=[-1,1]$ for all $\inddim$, so that $\Gamma=[-1,1]^d$. In certain situations, the number of parameters may even be countably infinite, in which case $\Gamma=[-1,1]^{\Nset}$.
Examples where such problems occur are recurrent in the numerical treatment of parametric and stochastic PDEs, where a fast and accurate approximation of the parameter-to-solution map over high-dimensional parameter sets is useful to tackle more complex optimization, control and inverse problems. 

In such a context, the potential of specific high-dimensional \emph{polynomial} approximation methods has been demonstrated in \cite{CDS2011,CD2015,MNVT2013,Migliorati2013,CCMNT2013}. In these methods, the approximation is picked from a multivariate polynomial space
\beu
\P_\Lambda:={\rm span} \{y \mapsto y^\nu \,: \, \nu\in \Lambda\},
\eeu
where $\Lambda$ is a given finite subset of $\Nset_0^d$. In the case of countably many parameters, $d=\infty$, we replace $\Nset_0^d$ by the set of finitely supported sequences of nonnegative integers.

The set $\Lambda$ is said to be \emph{downward closed} if and only if
\be
\nu \in \Lambda \quad {\rm and} \quad \mu\leq \nu \implies  \mu\in \Lambda,
\label{dc}
\ee
where $\mu\leq \nu$ is meant component-wise as $\mu_i\leq \nu_i$ for all $i$. Polynomial spaces $\P_\Lambda$ associated to downward closed index sets $\Lambda$ have been studied in various contexts, see \cite{CCS,DF,DR,K,LL}.

There exist two main approaches to polynomial approximation of a given function $u$ based on pointwise evaluations. The first approach relies on \emph{interpolation} of the function $\tarfun$ at a given set of points $\{y^1,\dots,y^n\}$ where $n:=\#(\Lambda)=\dim(\P_\Lambda)$, that is, find $\genpol \in \P_\Lambda$ such that $\genpol(y^\indmeas)=\tarfun(y^\indmeas)$ for $\indmeas=1,\dots,n$. 
The second approach relies on \emph{projection}, which aims at minimizing the $L^2(\Gamma,d\rho)$ error between $\tarfun$ and its approximation in $\P_\Lambda$. Since the exact projection is not available, one typical strategy consists in using the discrete least-squares method, that is, solving the problem
\beu
\min_{\genpol \in \P_\Lambda} \sum_{\indmeas=1}^m |\genpol(y^\indmeas)-\tarfun(y^\indmeas)|^2,
\eeu
where now $m>n$. 
Discrete least-squares methods are often preferred to interpolation methods when the observed evaluations are polluted by noise. Their convergence analysis has been studied in the general context of learning theory, see for example \cite{CZ2007,PS2003,Gyorfi2002,T2008,Vapnik1998}. 

In recent years, an analysis of discrete least-squares methods has been proposed \cite{CCMNT2013,MNST2011,Migliorati2013,MNT2015}, specifically targeted to the above described case of multivariate polynomial spaces associated with downward closed sets,
in the case where the $d\rho_\inddim$ are identical Jacobi-type measures. 
This analysis, which builds upon the general results from \cite{CDL2013}, gives conditions ensuring that, in the absence of noise in the pointwise evaluation of $u$, the accuracy of the discrete least-squares approximation is comparable to
the best approximation error achievable in $\P_\Lambda$, that is,
\beu
e_{\Lambda}(\tarfun):=\inf_{\genpol \in \P_\Lambda} \|\tarfun - \genpol \|_{L^2(\Gamma,d\rho)}.
\eeu
These conditions are stated in terms of a relation between the size $m$ of the sample and the dimension $n$ of $\P_\Lambda$. A similar analysis also covers the case of an additive noise in the evaluation of the samples, which results in additional terms in the error estimate, see \emph{e.g.} \cite{MNT2015}.

One remarkable result from the above analysis is that the conditions ensuring that the least-squares method has accuracy comparable to $e_{\Lambda}(\tarfun)$ only involve the dimension of $\P_\Lambda$. These conditions are independent of the specific shape of the set $\Lambda$ (as long as it is downward closed), and in particular independent of the dimension $d$. 

The possibility of using arbitrary sets $\Lambda$ is critical in the context of parametric PDEs in view of the recent results on high-dimensional polynomial approximation obtained in  \cite{CDS2011,CCS2013,CD2015}.
These results show that for relevant classes of parametric PDEs, the functions $y\mapsto u(y)$ describing either the full solution or scalar quantities of interest can be approximated with convergence rates ${\cal O}(n^{-s})$ which are {\it independent} of the parametric dimension $d$, when using polynomial spaces $\P_{\Lambda_n}$ associated to \emph{specific} sequences of downward closed multi-index sets $(\Lambda_n)_{n \geq 1}$ with $\#(\Lambda_n)=n$. 
In summary, we have
\be
e_n(\tarfun):=\min_{\#(\Lambda)=n} e_{\Lambda}(\tarfun) \leq Cn^{-s},
\label{opterror}
\ee
where the minimum is taken over all downward closed sets of given cardinality $n$.

For each value of $n$, the optimal set $\Lambda_n$ is the one that achieves the minimum in \eqref{opterror} among all downward closed $\Lambda$ of cardinality $n$. 
This set is unknown to us when observing only the samples $\tarfun(y^\indmeas)$. 
Therefore, a legitimate objective is to develop least-squares methods for which the accuracy is comparable to the quantity $e_n(\tarfun)$.

In this paper, we discuss least-squares approximations on multivariate polynomial spaces for which the choice of $\Lambda$ is optimized based on the sample.
In particular we prove that the performance of such approximations is comparable to the quantity in \eqref{opterror}, under a relation between $m$ and $n$ where the dimension $d$ enters as a logarithmic factor. 
Furthermore, we show that this logarithmic dependence on $d$ can be fully removed by considering a more restricted class of downward closed sets called \emph{anchored sets}, for which similar approximation rates as in \eqref{opterror} can be achieved. The resulting least-squares methods are thus immune to the curse of dimensionality.

The outline of the paper is the following: in Section~\ref{sec:ls_resume} we introduce the notation and
briefly review some of the previous results achieved in the analysis of discrete least squares on {\em fixed} multivariate polynomial spaces. 
In Section~\ref{sec:bestnterm_downward_closed} we present the main results of the paper concerning discrete
least-squares approximations on {\em optimized} polynomial spaces. Our analysis is based on
establishing upper bounds on the number of downward closed 
or anchored sets of a given cardinality, or on the cardinality of their union. 

The selection of the optimal polynomial space is based on minimizing
the least-squares error among all possible choices of downward closed or anchored sets of a given cardinality $n$.
Let us stress that in the form of an exhaustive search, this task becomes computationally intensive when $n$ and $d$ are
simultaneously large.
Our results should therefore mainly be viewed as a benchmark in arbitrary dimension $d$ 
for assessing the performance of fast selection algorithms,
such as greedy algorithms, that still need to be developed and analyzed in this context. 
A general discussion on alternate selection strategies with reasonable computational cost is presented in the final section \S 4.

\section{Discrete least-squares approximation by multivariate polynomials}
\label{sec:ls_resume}
In this section we introduce some useful notation, and recall from \cite{CCMNT2013} the main results achieved for the analysis of the stability and accuracy of discrete least-squares approximations in multivariate polynomial spaces.

\subsection{Notation}
\noindent
In any given dimension $d \in \Nset$, we consider the domain $\support:=[-1,1]^d$, and
for some given real numbers $\paraone,\paratwo>-1$, the tensorized Jacobi measure 
\beu
d\rho= \otimes_{j=1}^d d\beta,
\eeu 
with density
\beu
\rho(y):=  \prod_{j=1}^d \beta(y_j),
\eeu
where
\beu
d\beta=\beta(t)dt:=c(1-t)^{\paraone}(1+t)^{\paratwo} dt, \quad c:=\left( \int_{-1}^{1} (1-t)^{\paraone}(1+t)^{\paratwo} dt \right)^{-1}.
\eeu

We may also consider the case $\support := [-1,1]^\Nset$ for which $d=+\infty$ and $d\rho$ is the Jacobi measure defined over $\support$ in the usual manner. We denote by $L^2(\support,d\rho)$ the Hilbert space of real-valued square-integrable functions with respect to $d\rho$ and denote by $\|\cdot\|$ the associated norm, i.e.  
\beu
\| v\|:= \left( \int_\Gamma |v(y)|^2 d\rho(y)\right)^{1/2}.
\eeu
Moreover, let $\cF$ be defined as the set $\Nset_0^d$, where $\Nset_0:=\{0,1,2,\dots\}$, in the case $d<+\infty$, or as the countable set of all finitely supported sequences from $\Nset_0^\Nset$ in the case $d=+\infty$. 
For any $\nu\in\cF$ we define 
\beu
\supp(\nu):=\{ \inddim \geq 1 : \nu_\inddim \neq 0 \},
\eeu
and for any multi-index set $\Lambda\subseteq\cF$ we define 
\beu
\supp(\Lambda):=\cup_{\nu \in \Lambda} \supp\{ \nu  \}.
\eeu
We say that a coordinate $y_j$ is {\it active} in the space $\P_\Lambda$ when $j\in \supp(\Lambda)$.

For the given real parameters $\paraone,\paratwo>-1$, we introduce the family $(J_n)_{n\geq 0}$ of univariate orthonormal Jacobi polynomials associated with the measure $d\beta$, and their tensorized counterpart
\beu
J_\nu(y)=\prod_{j\geq 1} J_{\nu_j}(y_j), \quad y=(y_j)_{j\geq 1},
\eeu
for any $\nu \in \cF$.
The $(J_{\nu})_{\nu \in \cF}$ are an $L^2(\Gamma,d\rho)$-orthonormal basis.
Particular instances of these polynomials are tensorized Legendre polynomials when $\paraone=\paratwo=0$ and tensorized Chebyshev polynomials of the first kind when $\paraone=\paratwo=-1/2$.

In the present paper we focus on finite multi-index sets $\Lambda$ which are downward closed in the sense of \eqref{dc}.
We also say that a polynomial space $\P_\Lambda$ is downward closed when it is associated with a downward closed multi-index set $\Lambda\subset \cF$. 
Recall that $\P_\Lambda$ has been defined as the span of the monomials $y\mapsto y^\nu$ for $\nu\in \Lambda$. Therefore it admits $(J_\nu)_{\nu\in \Lambda}$ as an $L^2(\Gamma,d\rho)$-orthonormal basis in the case of $\Lambda$ downward closed.
Sometimes we enumerate the indices $\nu$ using the lexicographical ordering, and denote this basis by $(\psi_k)_{k=1,\dots,n}$, where
\beu
n:=\#(\Lambda)=\dim(\P_\Lambda).
\eeu

Given a finite downward closed multi-index set $\Lambda\subset\cF$, we would like to approximate  the target function $\tarfun:\Gamma\to\Rset$ in the $L^2$ sense, using the noiseless evaluations $(\tarfun(y^i))_{i=1,\ldots,m}$ of $\tarfun$ at the points $(y^i)_{i=1,\ldots,m}$, where the $y^i$ are i.i.d.~random variables distributed according to $\rho$. We define the continuous $L^2$ projection of $\tarfun$ on the polynomial space $\P_\Lambda$ as 
\begin{equation*}
\Pi_\Lambda \tarfun := \argmin_{\genpol \in \P_\Lambda} \| \tarfun - \genpol \|, 
\end{equation*}
and the discrete least-squares approximation $\Pi_\Lambda^m \tarfun$ as   
\begin{equation}
\label{defPLambdan} 
\Pi_\Lambda^m \tarfun
:= \argmin_{\genpol \in \P_\Lambda} \| \tarfun - \genpol \|_m, 
\end{equation}
where we have used the notation
\beu
\| v \|_m := \left(\frac{1}{m}\sum_{i=1}^m v(y^i)^2\right)^{\frac{1}{2}}.
\eeu

We introduce the $m\times \#(\Lambda)$ design matrix $\bD$ and the vector $\boldsymbol{b}\in\Rset^m$ whose entries are given by $\bD_{i,k}=\psi_\indbasis(y^\indmeas)$ and $\boldsymbol{b}_{\indmeas}=\tarfun(y^\indmeas)$.
We define the Gramian matrix $\bG:=m^{-1} \bD^{\rm T} \bD$. The discrete least-squares projection in \eqref{defPLambdan} is then given by
\beu
\Pi_\Lambda^m \tarfun=\sum_{k=1}^{\#(\Lambda)}{\bf a}_k \psi_k,
\eeu
where the vector ${\bf a}=({\bf a}_k)\in \Rset^{\#(\Lambda)}$ is the solution to the normal equations
\beu
\bG \boldsymbol{\bf a} =  m^{-1}\bD^{\rm T} \boldsymbol{b}.
\eeu

\subsection{Previous results on the stability and accuracy of discrete least squares}
\noindent
We introduce the quantity
\begin{equation}
\label{defKVm}
K(\P_\Lambda) := \sup_{y\in\support} \sum_{\nu\in\Lambda} \left| J_\nu(y) \right|^2.
\end{equation}

It is proven in \cite{CCMNT2013} that discrete least squares in multivariate polynomial spaces are stable and accurate provided that a precise condition involving $m$ and $K(\P_\Lambda)$ is satisfied. 
Similar results have been proven in \cite{MNT2015} for the case of noisy observations of the target function, with several noise models. 

For any $\delta \in ]0,1[$ we introduce the positive quantity
\be
\zeta(\delta):=\delta + (1-\delta) \ln(1-\delta).
\label{zeta}
\ee
Given a threshold $\truncation\in\Rset_0^+$, we introduce the truncation operator  
\begin{align*}
T_\truncation(t):=& \textrm{sign}(t)\min\{ \truncation,\vert t \vert \}, \quad \textrm{ for any } t \in \Rset, 
\end{align*}
and use it to define the truncated discrete least-squares projection operator $u\mapsto T_\truncation(\Pi_\Lambda^m u)$.
The main results from \cite{CCMNT2013} concerning stability and accuracy of the discrete least-squares approximation with noiseless evaluations can be summarized as follows.

\begin{theorem}
\label{thm:theo_prev_analysis}
In any dimension $d$, for any $r > 0$, any $\delta\in ]0,1[$ and any downward closed multi-index set $\Lambda\subset\Nset_0^d$, one has
\be
\label{eq:stability_norm0}
\Pr \left( 
\left\{
 (1-\delta) \Vert \genpol \Vert^2 \leq
\Vert \genpol \Vert_m^2 
\leq 
(1+\delta) \Vert \genpol \Vert^2
, \ \forall \genpol \in \P_\Lambda     
\right\}
\right)\geq 1-2n\exp(-\zeta(\delta)m/K(\P_\Lambda)).
\ee
If the following condition between $m$ and $K(\P_\Lambda)$ is satisfied 
\begin{equation}
\dfrac{m}{\ln m} \geq \dfrac{(1+r)}{\zeta(\delta)} K(\P_\Lambda),
\label{eq:condition_K_M_CCMNT_jac_pol_noise}
\end{equation}
then 
\beu
\Pr \left( 
\left\{
 (1-\delta) \Vert \genpol \Vert^2 \leq
\Vert \genpol \Vert_m^2 
\leq 
(1+\delta) \Vert \genpol \Vert^2
, \ \forall \genpol \in \P_\Lambda     
\right\}
\right)\
 \geq 1 - 2 m^{-r}.
\eeu
Moreover, for any $\tarfun \in L^\infty(\support)$ with $\Vert \tarfun \Vert_{L^\infty(\support)} \leq \truncation$, the following holds:
\begin{align}
& \Pr \left( \Vert \tarfun - \Pi_\Lambda^m \tarfun \Vert  \leq \left(1+\sqrt{\dfrac{1}{1-\delta}}\right) 
\inf_{\genpol \in \P_\Lambda} \Vert \tarfun - \genpol \Vert_{L^\infty(\Gamma)}
\right) \geq 
1 - 2 m^{-r}
,
\label{eq:accuracy_noiseless_prob} \\
& \E\left( \Vert \tarfun -  T_\truncation(\Pi_\Lambda^m u)\Vert^2 \right) \leq \left(1+ \frac{4\zeta(\delta)}{(1+r) \ln m} \right) \Vert \tarfun - \Pi_\Lambda \tarfun \Vert^2 + 8 \truncation^2 m^{-r}.
\label{eq:accuracy_noiseless_exp}  
\end{align}
\end{theorem}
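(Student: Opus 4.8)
The whole statement hinges on a single random event, namely that the Gramian $\bG=m^{-1}\bD^{\rm T}\bD$ is close to the identity; once this is established with the stated probability, the four conclusions follow by deterministic or elementary probabilistic manipulations. I would first reduce \eqref{eq:stability_norm0} to a matrix concentration inequality. Since $\Lambda$ is downward closed, $(J_\nu)_{\nu\in\Lambda}$ is an $L^2(\Gamma,d\rho)$-orthonormal basis of $\P_\Lambda$, so in the basis $(\psi_k)$ the continuous norm is the Euclidean norm of the coefficient vector and $\|v\|_m^2$ is the quadratic form associated with $\bG$. Hence the event in \eqref{eq:stability_norm0} is exactly $\{\,\|\bG-\bI\|_2\le\delta\,\}$, i.e. all eigenvalues of $\bG$ lie in $[1-\delta,1+\delta]$. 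Writing $\bG=\frac1m\sum_{i=1}^m X_i$ with $X_i:=\phi(y^i)\phi(y^i)^{\rm T}$ and $\phi(y):=(\psi_1(y),\dots,\psi_n(y))^{\rm T}$, the $X_i$ are i.i.d.\ rank-one positive semidefinite matrices with $\E X_i=\bI$ (orthonormality) and $\lambda_{\max}(X_i)=|\phi(y^i)|^2=\sum_{\nu\in\Lambda}|J_\nu(y^i)|^2\le K(\P_\Lambda)$. Applying a matrix Chernoff inequality to the smallest and largest eigenvalues of $\sum_i X_i$, and using that $\ln\frac{e^{-\delta}}{(1-\delta)^{1-\delta}}=-\zeta(\delta)$, a union bound over the two tails produces the factor $2n\exp(-\zeta(\delta)m/K(\P_\Lambda))$ and hence \eqref{eq:stability_norm0}; note that the lower-tail control $\lambda_{\min}(\bG)\ge1-\delta$ is the one that will actually be invoked in all the accuracy estimates below.

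Passing from \eqref{eq:condition_K_M_CCMNT_jac_pol_noise} to the probability $1-2m^{-r}$ is immediate: the condition gives $\zeta(\delta)m/K(\P_\Lambda)\ge(1+r)\ln m$, so the failure probability is at most $2n\,m^{-(1+r)}=2(n/m)m^{-r}$. Since $K(\P_\Lambda)\ge n$ (the supremum defining $K$ dominates its $\rho$-average, which equals $n$), the condition forces $m\ge n$, whence the bound $1-2m^{-r}$.

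For the high-probability $L^\infty$ estimate \eqref{eq:accuracy_noiseless_prob}, work on the good event $\cA:=\{\|\bG-\bI\|_2\le\delta\}$, of probability at least $1-2m^{-r}$, and exploit linearity of $\Pi_\Lambda^m$: for any $w\in\P_\Lambda$ one has $\Pi_\Lambda^m w=w$, so $u-\Pi_\Lambda^m u=(u-w)-\Pi_\Lambda^m(u-w)=:g-\Pi_\Lambda^m g$. Because $\Pi_\Lambda^m g$ is the $\|\cdot\|_m$-orthogonal projection of $g$, $\|\Pi_\Lambda^m g\|_m\le\|g\|_m$, and on $\cA$ the lower norm equivalence gives $\|\Pi_\Lambda^m g\|\le(1-\delta)^{-1/2}\|\Pi_\Lambda^m g\|_m\le(1-\delta)^{-1/2}\|g\|_m$. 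A triangle inequality then yields $\|u-\Pi_\Lambda^m u\|\le\|g\|+(1-\delta)^{-1/2}\|g\|_m$; bounding both $\|g\|$ and $\|g\|_m$ by $\|g\|_{L^\infty(\Gamma)}$ (both measures being probability measures) and minimizing over $w\in\P_\Lambda$ produces exactly the constant $1+\sqrt{1/(1-\delta)}$.

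The expectation bound \eqref{eq:accuracy_noiseless_exp} is where the real work lies, and I expect it to be the main obstacle. I would split $\E\|u-T_\tau(\Pi_\Lambda^m u)\|^2$ over $\cA$ and $\cA^c$. On $\cA^c$, since $\|u\|_{L^\infty}\le\tau$ and $|T_\tau(\cdot)|\le\tau$ the integrand is at most $4\tau^2$, and $\P(\cA^c)\le2m^{-r}$ yields the additive term $8\tau^2m^{-r}$. On $\cA$, the truncation is a contraction toward $u$ (as $|u|\le\tau$), so $|u-T_\tau(\Pi_\Lambda^m u)|\le|u-\Pi_\Lambda^m u|$ pointwise. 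Choosing $w=\Pi_\Lambda u$ makes $g=u-\Pi_\Lambda u$ orthogonal to $\P_\Lambda$, whence $\langle g,\Pi_\Lambda^m g\rangle=0$ and the clean split $\|u-\Pi_\Lambda^m u\|^2=\|g\|^2+\|\Pi_\Lambda^m g\|^2=\|u-\Pi_\Lambda u\|^2+\|\Pi_\Lambda^m g\|^2$. The delicate step is to show that $\E[\mathbf 1_\cA\|\Pi_\Lambda^m g\|^2]$ is a small multiple of $\|u-\Pi_\Lambda u\|^2$: writing $\Pi_\Lambda^m g=\sum_k\ba_k\psi_k$ with $\ba=\bG^{-1}v$ and $v=\frac1m\bD^{\rm T}(g(y^i))_i$, I would note that $\E v=0$ because $g\perp\P_\Lambda$, and compute $\E|v|^2=\frac1m\E\big[g(y)^2\sum_k\psi_k(y)^2\big]\le\frac{K(\P_\Lambda)}{m}\|g\|^2$ using the pointwise bound $\sum_{\nu\in\Lambda}|J_\nu|^2\le K(\P_\Lambda)$. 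Controlling $\|\bG^{-1}\|$ on $\cA$ then gives $\E[\mathbf 1_\cA\|\Pi_\Lambda^m g\|^2]\lesssim\frac{K(\P_\Lambda)}{m}\|u-\Pi_\Lambda u\|^2$, and inserting the budget condition in the form $K(\P_\Lambda)/m\le\zeta(\delta)/((1+r)\ln m)$ converts this into the constant $1+\frac{4\zeta(\delta)}{(1+r)\ln m}$. Tracking this last constant precisely—reconciling the $(1-\delta)^{-2}$ coming from $\|\bG^{-1}\|^2$ with the advertised factor $4$—is the only genuinely fussy part of the argument; the rest is structural.
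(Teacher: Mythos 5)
First, a point of reference: the paper does not prove Theorem~\ref{thm:theo_prev_analysis} at all --- it is quoted from \cite{CCMNT2013} (which builds on \cite{CDL2013}), so there is no in-paper proof to compare against. Your reconstruction is, in substance, exactly the argument of those references: the event in \eqref{eq:stability_norm0} is $\{\Vert \bG-\bI\Vert_2\le\delta\}$, controlled by a matrix Chernoff bound for i.i.d.\ rank-one summands with $\lambda_{\max}\le K(\P_\Lambda)$; condition \eqref{eq:condition_K_M_CCMNT_jac_pol_noise} converts the tail into $2n\,m^{-(1+r)}\le 2m^{-r}$ via $K(\P_\Lambda)\ge n$; the $L^\infty$ bound follows from $u-\Pi_\Lambda^m u=g-\Pi_\Lambda^m g$ together with the lower norm equivalence on the good event; and the expectation bound follows from the split over $\cA$ and $\cA^c$, the orthogonal decomposition with $g=u-\Pi_\Lambda u$, and the variance computation $\E\vert v\vert^2\le \frac{K(\P_\Lambda)}{m}\Vert g\Vert^2$. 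This is the right route and essentially complete.

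Two loose ends deserve mention. (i) The union bound over the two tails does not literally produce $2n\exp(-\zeta(\delta)m/K(\P_\Lambda))$: the standard matrix Chernoff bound yields the exponent $\zeta(\delta)=\delta+(1-\delta)\ln(1-\delta)$ only for the lower tail, while the upper tail comes with the smaller exponent $(1+\delta)\ln(1+\delta)-\delta$. This imprecision is inherited from the cited statement and is harmless here because, as you correctly note, only the lower bound $\lambda_{\min}(\bG)\ge 1-\delta$ is used in \eqref{eq:accuracy_noiseless_prob} and \eqref{eq:accuracy_noiseless_exp}. (ii) The constant $4$ in \eqref{eq:accuracy_noiseless_exp} is a genuine gap in your sketch as it stands: your computation yields the factor $(1-\delta)^{-2}$ in front of $\frac{K(\P_\Lambda)}{m}\Vert u-\Pi_\Lambda u\Vert^2$, and $(1-\delta)^{-2}\le 4$ only for $\delta\le 1/2$. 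In \cite{CDL2013} the expectation bound is established precisely with $\delta=1/2$, which is where the $4$ originates; for $\delta>1/2$ the advertised constant does not follow from this argument. Since the present paper only ever invokes the theorem with $\delta=1/2$ (Section~3.2), nothing downstream is affected, but to be rigorous you should either restrict \eqref{eq:accuracy_noiseless_exp} to $\delta\le 1/2$ or replace $4$ by $(1-\delta)^{-2}$.
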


The above theorem states that under condition \eqref{eq:condition_K_M_CCMNT_jac_pol_noise} the discrete least-squares approximation is stable, since one has
\beu
 (1-\delta) \Vert \genpol \Vert^2 \leq
\Vert \genpol \Vert_m^2 
\leq 
(1+\delta) \Vert \genpol \Vert^2
, \ \forall \genpol \in \P_\Lambda     \Leftrightarrow  
 (1-\delta) \bI\leq \bG \leq (1+\delta) \bI,
\eeu
where $\bI$ denotes the identity matrix. 
Under the same condition, the discrete least-squares approximation is also accurate in probability, from \eqref{eq:accuracy_noiseless_prob}, and in expectation, from \eqref{eq:accuracy_noiseless_exp}, since the approximation error behaves like the best approximation error in $L^\infty$ or in $L^2$.

The quantity $K(\P_\Lambda)$ depends both on $\Lambda$ 
and on the chosen Jacobi measure, and therefore on the parameters $\theta_1,\theta_2$.
The following result from \cite{CCMNT2013} and \cite{M2014a} shows that,
once these two parameters are fixed, the quantity $K(\P_\Lambda)$ satisfies bounds that only depend on $\#(\Lambda)$, independently of the particular shape of $\Lambda$ and of the dimension $d$.

\begin{lemma}
\label{BoundJacobi}
In any dimension $d$ and for any finite downward closed set $\Lambda \subset \cF$, one has
\begin{equation*}
\#(\Lambda)
\leq
K(\P_\Lambda)
\leq
\begin{cases}
(\#(\Lambda))^{\ln 3 / \ln 2}, & \textrm{ if } \paraone=\paratwo=-1/2, \\
(\#(\Lambda))^{2\max\{\paraone,\paratwo\}+2}
& \textrm{ if } \paraone,\paratwo\in\Nset_0.
\end{cases}
\end{equation*}
\end{lemma}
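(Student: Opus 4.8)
The plan is to handle the two inequalities separately. For the lower bound $\#(\Lambda)\le K(\P_\Lambda)$ I would simply integrate the nonnegative quantity $\sum_{\nu\in\Lambda}|J_\nu(y)|^2$ against $d\rho$: since the $(J_\nu)_{\nu\in\Lambda}$ are $L^2(\support,d\rho)$-orthonormal, this integral equals $\#(\Lambda)$, and a supremum is always at least an average, so $K(\P_\Lambda)\ge\#(\Lambda)$. This step uses neither downward closedness nor the specific Jacobi parameters.

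The upper bound is the substantial part. First I would factor the tensorized basis, $|J_\nu(y)|^2=\prod_{j\ge1}|J_{\nu_j}(y_j)|^2$, and bound each factor by its univariate sup norm to get $K(\P_\Lambda)\le\sum_{\nu\in\Lambda}\prod_{j\ge1}\|J_{\nu_j}\|_{L^\infty}^2$, where $\|J_0\|_{L^\infty}=1$ so that only the active coordinates contribute to the product. The univariate input I then need is a bound on the partial sums $H(n):=\sum_{k=0}^{n-1}\|J_k\|_{L^\infty}^2$. In the Chebyshev case $\|J_k\|_{L^\infty}^2=2$ for $k\ge1$, so $H(n)=2n-1$, and one checks $2n-1\le n^{\ln 3/\ln 2}$ for all $n\ge1$ (equality at $n=1,2$, then convexity of $n\mapsto n^{\ln 3/\ln 2}$ above its secant). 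When $\paraone,\paratwo\in\Nset_0$ the maximum of $|J_k|$ is attained at an endpoint $\pm1$, and the explicit endpoint evaluation of the orthonormal Jacobi polynomials (as in \cite{CCMNT2013,M2014a}) gives $H(n)\le n^{2\max\{\paraone,\paratwo\}+2}$; for Legendre this is the exact identity $\sum_{k<n}(2k+1)=n^2$.

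Writing $\alpha$ for the relevant exponent ($\ln 3/\ln 2$, or $2\max\{\paraone,\paratwo\}+2$), both cases reduce to a single purely combinatorial tensorization statement: if $h:\Nset_0\to\Rset_{>0}$ is nondecreasing with $h(0)=1$ and $\sum_{k=0}^{n-1}h(k)\le n^\alpha$ for all $n$, then $\sum_{\nu\in\Lambda}\prod_{j\ge1}h(\nu_j)\le(\#(\Lambda))^\alpha$ for every finite downward closed $\Lambda\subseteq\cF$. I would prove this by induction on the number of active coordinates. Slicing $\Lambda$ along one active coordinate produces nested downward closed sets $\Lambda_0\supseteq\Lambda_1\supseteq\cdots$ whose cardinalities $N_0\ge N_1\ge\cdots$ sum to $\#(\Lambda)$; applying the inductive hypothesis to each slice reduces the claim to the scalar inequality $\sum_k h(k)N_k^\alpha\le(\sum_k N_k)^\alpha$ for nonincreasing integers $N_k$.

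This scalar inequality is where I expect the real work to lie. The plan is first to apply summation by parts, using $H(k+1)\le(k+1)^\alpha$ together with the monotonicity of $k\mapsto N_k^\alpha$, to dominate the left-hand side by the same expression with the extremal weight $h(k)=(k+1)^\alpha-k^\alpha$; then to recognize that sum as the integral $\int_0^\infty\phi(x)^\alpha\,\alpha x^{\alpha-1}\,dx$ of the step function $\phi$ equal to $N_k$ on $[k,k+1)$, while $\sum_k N_k=\int_0^\infty\phi$. Since $\phi$ is nonincreasing, $x\phi(x)\le\int_0^x\phi=:\Phi(x)$, whence the pointwise bound $\alpha x^{\alpha-1}\phi(x)^\alpha\le\alpha\Phi(x)^{\alpha-1}\phi(x)=\tfrac{d}{dx}\Phi(x)^\alpha$; integrating gives exactly $(\sum_k N_k)^\alpha$ and closes the induction. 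The delicate points are the univariate endpoint estimates for general integer parameters and the bookkeeping of the two summations by parts, but the dimension $d$ enters nowhere in this combinatorial core, which is precisely why the resulting bound on $K(\P_\Lambda)$ depends on $\#(\Lambda)$ alone.
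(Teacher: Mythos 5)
Your proposal is correct, but note that the paper itself does not prove Lemma~\ref{BoundJacobi}: it imports the statement from \cite{CCMNT2013} and \cite{M2014a}, so the relevant comparison is with the arguments in those references. Your overall strategy coincides with theirs --- bound $K(\P_\Lambda)$ by $\sum_{\nu\in\Lambda}\prod_j\|J_{\nu_j}\|_{L^\infty}^2$, then prove the tensorized combinatorial bound by induction on the number of active coordinates, slicing $\Lambda$ along one coordinate into nested downward closed sets with nonincreasing cardinalities $N_0\ge N_1\ge\cdots$. Where you genuinely improve on the presentation is the scalar inequality $\sum_k h(k)N_k^\alpha\le\bigl(\sum_k N_k\bigr)^\alpha$: the cited works treat the Chebyshev and Legendre/Jacobi cases by separate ad hoc manipulations, whereas your Abel-summation reduction to the extremal weight $(k+1)^\alpha-k^\alpha$, followed by the comparison $x\phi(x)\le\Phi(x)$ and integration of $\frac{d}{dx}\Phi(x)^\alpha$, handles all exponents $\alpha\ge1$ uniformly; I checked the bookkeeping and it is sound (the hypothesis that $h$ be nondecreasing is actually never used --- only $\sum_{k<n}h(k)\le n^\alpha$ and the monotonicity of the $N_k$ matter). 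The one ingredient you do not supply is the univariate estimate $\sum_{k<n}\|J_k\|_{L^\infty}^2\le n^{2\max\{\paraone,\paratwo\}+2}$ for integer Jacobi parameters, which requires the endpoint evaluation of the orthonormalized Jacobi polynomials; you explicitly defer this to \cite{CCMNT2013,M2014a}, which is exactly where the paper itself locates that computation, so this is an acceptable black box rather than a gap.
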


Combining the two results, one therefore obtains sufficient conditions for stability and optimal accuracy expressed only in terms of a relation between $\#(\Lambda)$ and $m$. For example, in the case of the uniform measure that corresponds to $\theta_1=\theta_2=0$, this relation is of the form
\begin{equation*}
\dfrac{m}{\ln m} \geq c\, \left( \#(\Lambda) \right)^2, \quad c:=c(\delta,r).
\end{equation*}

\section{Optimal selection of downward closed polynomial spaces}
\label{sec:bestnterm_downward_closed}

The results recalled in the previous section hold for a given downward closed set $\Lambda\subset \cF$. We now consider the problem of optimizing the choice of $\Lambda$, or equivalently that of the space $\P_\Lambda$.

\subsection{Optimized index sets}

We define the family
\begin{equation*}
 \cM^d_n := \{\Lambda\subset \cF \; : \; \text{$\Lambda$ is downward closed and } \#(\Lambda)=n\},
\end{equation*}
of all downward closed sets of cardinality $n$ in $d$ coordinates.  Note that, in contrast to the family of {\it all} subsets of $\cF$ of cardinality $n$, the family $\cM^d_n$ is finite.

The error of best $n$-term polynomial approximation by downward closed sets is then defined by
\beu
\sigma_n(u):=\min_{\Lambda \in  \cM_{n}^{d}} \min_{ \genpol \in \P_\Lambda} \|\tarfun-\genpol\|.
\eeu
A best $n$-term downward closed set is a $\Lambda \in  \cM_{n}^{d}$ that achieves this minimum, that is, such that
\beu
 \Lambda^{opt} := \argmin_{\Lambda \in \cM_{n}^{d} } \min_{ \genpol \in \P_\Lambda} \|\tarfun-\genpol\|.
\eeu
Using Parseval identity, we find that $\Lambda^{opt}$ is also given by
\beu
 \Lambda^{opt} = \argmin_{\Lambda \in \cM_{n}^{d} } \sum_{\nu\notin \Lambda} |u_\nu|^2, \quad u_\nu=\int_\Gamma u(y) J_\nu(y) d\rho(y). 
\eeu
Note that such a set may not be unique due to possible ties in the values of the coefficients, in which case we consider a
unique choice by breaking the ties in some arbitrary but fixed way. We set
\be
u_n:=\Pi_{\Lambda^{opt} }u=\argmin_{ \genpol \in \P_{\Lambda^{opt}}} \|\tarfun-\genpol\|.
\label{un}
\ee
Of course, in the least-squares method, the discrete data do not allow us to identify $\Lambda^{opt}$. Instead, we rely on
\be
\Lambda^{opt}_{ m } := \argmin_{\Lambda \in \cM_{n}^{d} } \min_{ \genpol \in \P_\Lambda } \| \tarfun - \genpol \|_m,
\label{optim}
\ee
and compute
\beu
w_n:=\Pi_{\Lambda^{opt}_m}^m u=\argmin_{ \genpol \in \P_{\Lambda^{opt}_m}} \|\tarfun-\genpol\|_m.
\eeu
Our objective is now to compare the accuracy of the polynomial 
least-squares approximation based on $\Lambda^{opt}_{m}$ with the above optimal error $\sigma_n(u)$. For this purpose, we shall use the random variable
\begin{equation*}
  \costgen_{n}^{d} := \max_{\Lambda\in\cM_{n}^{d} } \max_{\genpol \in \P_\Lambda} \frac{\|\genpol\|^2}{\|\genpol\|^2_{m} }.
\end{equation*}

Note that the search of $\Lambda^{opt}_{m}$ remains a difficult task from the computational point of view, due to the fact that $\#(\cM_n^d )$ becomes very large even for moderate values of $n$ and $d$. As we discuss further, this cardinality also affects the conditions between $m$ and $n$ which guarantee the optimality of the least-squares approximation based on $\Lambda^{opt}_m$.

For this reason, it is useful to introduce an additional restriction on the potential index sets. We say that $\Lambda$ is {\it anchored} if and only if it is downward closed and satisfies in addition
\begin{equation*}
e_{\inddim} \in \Lambda \textrm{ and } \inddim' \leq \inddim \implies e_{\inddim'} \in \Lambda,
\end{equation*}
where $e_\inddim$ and $e_{\inddim'}$ are the Kronecker sequences with $1$ at positions $\inddim$ and $\inddim'$, respectively. 
We also say that a polynomial space $\P_\Lambda$ is anchored when $\Lambda$ is anchored. Likewise, we define the family
\begin{equation*}
 \cA_{n}^{} := 
\{\Lambda\subset\cF \; : \; \text{$\Lambda$ is anchored}
\text{ and } 
\#(\Lambda)=n\}.
\end{equation*}

The restriction to anchored sets introduces an order of priority between the coordinates: given any $j\geq 1$, the coordinate $y_j$ is active in $\Lambda$ only if all the coordinates $y_k$ for $k<j$ are also active.
In particular, for any set $\Lambda\in\cA_n$ we have 
\be
\supp(\Lambda)=\{1,\dots,k\},
\label{supportanchored}
\ee 
for some $k\leq n-1$.

It is proven in \cite{CD2015} that, for relevant classes of parametric PDEs, the same algebraic convergence rates ${\cal O}(n^{-s})$ can be obtained when imposing the anchored structure on the optimally selected sets $(\Lambda_n)_{n \geq 1}$ with $\#(\Lambda_n)=n$. As we shall see further, one specific advantage of anchored sets is to completely remove the dependence on the dimension $d$ in the convergence analysis of the least-squares method, and allows us in particular to consider the infinite-dimensional framework.

Using the same notation as before with obvious modifications, we introduce the following entities:
\be
 \t \Lambda^{opt} := \argmin_{\Lambda \in \cA_{n} } \min_{ \genpol \in \P_\Lambda} \|\tarfun-\genpol\|, \qquad 
 \t u_n := \Pi_{\t\Lambda^{opt}} u=
  \argmin_{\genpol \in \P_{\t\Lambda^{opt}}} \| \tarfun - \genpol \|, 
  \label{tun}
\ee
\beu
\t  \Lambda^{opt}_{ m } := \argmin_{\Lambda \in \cA_{n} } \min_{ \genpol \in \P_\Lambda } \| \tarfun - \genpol \|_m, 
\qquad 
 \t w_n :=\Pi_{\t\Lambda^{opt}_m}^m u= \argmin_{ \genpol \in \P_{\t \Lambda^{opt}_m } } \| \tarfun - \genpol \|_m, 
\eeu
and
\begin{equation*}
\wt  \costgen_{n}^{} := \max_{\Lambda\in\cA_{n}^{}} \max_{\genpol \in \P_\Lambda} \frac{\|\genpol\|^2}{\|\genpol\|^2_{m} }.
\end{equation*}

\begin{remark}
\label{remmatrix}
The estimators $w_n$ and $\t w_n$ can be viewed as the solutions of a nonconvex 
optimization problem. This problem has a natural algebraic formulation. Recalling
$(J_\nu)_{\nu\in\cF}$ the $L^2(\Gamma,d\rho)$-orthonormal basis, we introduce 
for a given finite set $\cJ\subset \cF$ the
design matrix 
\be
\bD=(J_\nu(y^i)),
\label{design}
\ee 
where the row index $i$ ranges in $\{1,\dots,m\}$ and the column index $\nu$ ranges in $\cJ$.
Then, recalling the data vector $\bb=(u(y^i))_{i=1,\dots,m}$,
and taking $\cJ$ as the union of all downward closed sets of cardinality $n$, we find that the
component vector $\ba=(a_\nu)_{\nu\in\cJ}$ of $w_n=\sum_{\nu\in\cJ} a_\nu J_\nu$ is the solution
to the constrained minimization
\beu
\min \{ \|\bD \ba-\bb\|_{\ell^2} \;: \; \|\ba\|_{\ell^0_d}\leq n\}.
\eeu
Here $\|\ba\|_{\ell^0_d}$ is the cardinality of the smallest downward closed set $\Lambda\subset \cJ$
that contains all the nonzero entries of $\ba$.   In other words, we minimize over those $\ba$ 
whose support is contained in a downward closed set of cardinality at most $n$. 
Likewise, taking $\cJ$ as the union of all anchored sets of cardinality $n$, we find that the
component vector $\wt \ba=(\t a_\nu)_{\nu\in\cJ}$ of $\t w_n=\sum_{\nu\in\cJ} \t a_\nu J_\nu$ is the solution
of the constrained minimization
\beu
\min \{ \|\bD \wt \ba-\bb\|_{\ell^2} \;: \; \|\wt \ba\|_{\ell^0_a}\leq n\}.
\eeu
Here $\|\wt \ba\|_{\ell^0_a}$ is the cardinality of the smallest anchored set $\Lambda\subset \cJ$
that contains all the nonzero entries of $\wt \ba$.  It is well known that such optimization problems with $\ell^0$-type
constraints have combinatorial nature, and in turn numerical algorithms for computing their solutions
do not generally have polynomial complexity. This justifies in practice the use of convex relaxation methods 
such as basis pursuit, or greedy selection strategies such as orthonormal matching pursuit. 
While our paper is mainly directed towards the convergence properties of the estimators $w_n$ and $\t w_n$ which are the exact solutions to the above problems, we discuss these numerical methods
in the final section.
\end{remark}

We now would like to compare the estimators $w_n$ and $\t w_n$ with the best $n$-term approximation in the aforementioned classes of multi-index sets $\cM_{n}^d$ and $\cA_n^{}$. The following lemma shows the role played by $\costgen_{n}^{d}$ and $\wt \costgen_{n}^{}$ in quantifying the relation between the error  
achieved by the optimal discrete least-squares projection and  the error achieved by the optimal $L^2$ projection. 
\begin{lemma}
\label{thm:optimality_discrete_continuous_proj}
It holds that, for any $\Lambda\in \cM^d_n$,
\be
  \| \tarfun - w_{n} \| \le \| \tarfun - v \| + 2\sqrt{\costgen_{2n-1}^{d} } \| \tarfun - v \|_m,\quad v\in \P_\Lambda,
\label{estimlower}
\ee
and for any $\Lambda\in \cA_n$,
\be
  \| \tarfun - \t w_{n} \| \le \| \tarfun - \t v \| + 
2\sqrt{\wt \costgen_{2n -1}^{} } \| \tarfun - \t v \|_m, \quad  \t v \in \P_\Lambda.
\label{estimanchor}
\ee
\end{lemma}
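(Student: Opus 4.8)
The plan is to move from the continuous norm to the discrete norm through the stability constant $\costgen_{2n-1}^{d}$, using two facts: that $\cM_n^d$ is closed under unions, and that $w_n$ is \emph{discretely} optimal. First I would fix $\Lambda\in\cM_n^d$ and $\genpol\in\P_\Lambda$ and apply the triangle inequality in the continuous norm,
\beu
\|\tarfun-w_n\| \le \|\tarfun-\genpol\| + \|\genpol-w_n\|.
\eeu
The first term already matches the target; the whole work lies in controlling $\|\genpol-w_n\|$. The polynomial $\genpol-w_n$ has its coefficients supported in $\Lambda\cup\Lambda^{opt}_{m}$, where $\Lambda^{opt}_{m}$ is the minimizing set of \eqref{optim}.

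The key structural observation is that $\Lambda\cup\Lambda^{opt}_{m}$ is again downward closed, being a union of downward closed sets, and that every nonempty downward closed set contains the null multi-index $0$; hence $\#(\Lambda\cup\Lambda^{opt}_{m})\le 2n-1$. Thus $\genpol-w_n\in\P_{\Lambda'}$ for some downward closed $\Lambda'$ of cardinality at most $2n-1$. Since the map $k\mapsto\costgen_k^d$ is nondecreasing (any downward closed set embeds into a larger one, with the matching inclusion of polynomial spaces), the very definition of $\costgen_{2n-1}^{d}$ gives
\beu
\|\genpol-w_n\| \le \sqrt{\costgen_{2n-1}^{d}}\,\|\genpol-w_n\|_m.
\eeu

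Next I would estimate the discrete norm on the right. A triangle inequality in $\|\cdot\|_m$ gives $\|\genpol-w_n\|_m\le\|\tarfun-\genpol\|_m+\|\tarfun-w_n\|_m$, and here the discrete optimality enters: since $w_n=\Pi_{\Lambda^{opt}_{m}}^m\tarfun$ realizes the minimum of $\min_{\genpol\in\P_\Lambda}\|\tarfun-\genpol\|_m$ over all $\Lambda\in\cM_n^d$, while $\genpol\in\P_\Lambda$ with $\Lambda\in\cM_n^d$ is an admissible competitor, we have $\|\tarfun-w_n\|_m\le\|\tarfun-\genpol\|_m$. Consequently $\|\genpol-w_n\|_m\le 2\|\tarfun-\genpol\|_m$, and inserting this into the previous two displays yields \eqref{estimlower}. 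For the anchored estimate \eqref{estimanchor} the argument is verbatim the same, the only points to recheck being that $\cA_n$ enjoys the same two structural properties: the union of two anchored sets is anchored (downward closedness is preserved, and if $e_j$ belongs to the union it belongs to one of the two sets, forcing $e_{j'}$ in that set for every $j'\le j$), and it again contains $0$, so the union has cardinality at most $2n-1$, which is exactly what $\wt\costgen_{2n-1}^{}$ accounts for.

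I expect no serious obstacle here; the two items requiring care are the cardinality count $2n-1$ rather than $2n$, which hinges on every nonempty downward closed (or anchored) set containing the null index, and the legitimacy of passing through the ratio defining $\costgen$, which is harmless because if $\|\genpol-w_n\|_m=0$ then either $\costgen_{2n-1}^{d}=+\infty$ and \eqref{estimlower} is vacuous, or $\costgen_{2n-1}^{d}<+\infty$ and the vanishing discrete norm on the relevant polynomial space forces $\|\genpol-w_n\|=0$ as well.
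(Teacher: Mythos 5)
Your proof is correct and follows essentially the same route as the paper's: form the union $\Lambda\cup\Lambda^{opt}_m$, note it is downward closed (resp.\ anchored) with cardinality at most $2n-1$ since both sets contain the null index, pass from $\|\genpol-w_n\|$ to $\|\genpol-w_n\|_m$ via $\costgen_{2n-1}^d$, and use the discrete optimality of $w_n$ to bound $\|\tarfun-w_n\|_m\le\|\tarfun-\genpol\|_m$. Your added remarks on the monotonicity of $k\mapsto\costgen_k^d$ and on the degenerate case $\|\genpol-w_n\|_m=0$ are correct refinements that the paper leaves implicit.
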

\begin{proof}
Let $\Lambda\in \cM^d_n$ and define $\wh \Lambda := \Lambda \cup\Lambda^{opt}_m $. We first observe that $\wh \Lambda$ is 
also downward closed and 
$\#(\wh\Lambda)\leq 2n -1$ because any downward closed set contains the null multi-index.
Since  $w_n \in \P_{\Lambda^{opt}_m}$,  we have $v - w_n \in \P_{\wh\Lambda}$ for any $v \in \P_{\Lambda}$. 
It follows that
\begin{align*}
 \|v- w_n \| 
\le 
\sqrt{\costgen_{ 2n -1  }^{d} } \|v - w_n \|_m 
\le 
\sqrt{\costgen_{ 2n -1 }^{d} }(\| u- v \|_m + \| u - w_n \|_m ) \le 2\sqrt{\costgen_{ 2n -1 }^{d} } \| u - v \|_m,  
\end{align*}
and therefore
\beu
 \| \tarfun - w_n \| \le \| \tarfun - v  \| + \| v- w_n \| 
\le 
\| \tarfun - v \| + 2\sqrt{\costgen_{ 2n -1 }^{d} } \| \tarfun - v \|_m,
\eeu
which is \eqref{estimlower}. The proof of \eqref{estimanchor} is analogous.
\end{proof}

Note that the estimates in the above lemma imply in particular that
\be
  \| \tarfun - w_{n} \| \le \| \tarfun - u_n \| + 2\sqrt{\costgen_{2n-1}^{d} } \| \tarfun - u_n \|_m,
\label{estimlower1}
\ee
and
\beu
  \| \tarfun - \t w_{n} \| \le \| \tarfun - \t u_n \| + 
2\sqrt{\wt \costgen_{2n -1}^{} } \| \tarfun - \t u_n \|_m, 
\eeu
with $u_n$ and $\t u_n$ defined by \eqref{un} and \eqref{tun}. Note that they also imply
\be
  \| \tarfun - w_{n} \| \le \(1 + 2\sqrt{\costgen_{2n-1}^{d} }\)\| \tarfun - v \|_{L^\infty},\quad v\in \P_\Lambda,
\label{estimlower2}
\ee
for any $\Lambda\in \cM_n^d$, and
\be
  \| \tarfun - \t w_{n} \| \le \(1 + 
2\sqrt{\wt \costgen_{2n -1}^{} }\) \| \tarfun - \t v \|_{L^\infty}, \quad  \t v \in \P_\Lambda,
\label{estimanchor2}
\ee
for any $\Lambda\in \cA_n$.

\subsection{Probabilistic bounds}
\label{sec:estimation_C}
\noindent 
In view of Lemma ~\ref{thm:optimality_discrete_continuous_proj}, we are interested
in bounding the random variables $\costgen_{n}^{d}$ and $\wt \costgen_{n}^{}$.
In this section we give probabilistic bounds, which ensure that under certain conditions 
between $m$ and $n$, these random variables do not exceed a fixed value, here set to $2$, with high probability. 
In the whole section we choose $\delta=1/2$, so that, with the notation \eqref{zeta}, one has
\beu
\zeta:=\zeta(\delta)=\zeta(1/2)=(1-\ln 2)/2 \approx 0.153.
\eeu 

We define, for any $\nu\in\cF$, the ``rectangular'' set $\cR_\nu := \{ \mu\in\cF, \; \mu\le\nu\}$, 
and for any $n \geq 1$, the hyperbolic cross set  
\beu
  \cH_{n}^{d} := \left\{\mu\in\cF: \;\; \prod_{j=1}^d (\mu_j+1) \le n \right\}.
\eeu
Note that 
\beu
 \cH_{n}^{d} =
\bigcup_{\#(\cR_\nu) \leq n } \cR_\nu.
\eeu
The cardinality of $\cH_{n}^{d} $
is bounded by
\be
\label{eq:up_bound_hc}
  \#(\cH_{n}^{d}) \le n (1+\ln (n ))^{d-1},
\ee
see \cite[Appendix~A.2]{Migliorati2013} for a proof and some remarks on the accuracy of this upper bound. The relevance of the hyperbolic cross for our purposes is due to the following observation.

\begin{lemma}
The union of all downward closed sets of cardinality at most $n$ in finite dimension $d$ coincides with $\cH_{n}^{d}$, that is,
\be
  \bigcup_{ \Lambda \in \cM_{n}^{d} }\Lambda = \cH_{n}^{d}. 
  \label{unionlower}
\ee
\end{lemma}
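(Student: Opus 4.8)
The plan is to prove the two set inclusions separately, relying on the elementary identity $\#(\cR_\nu)=\prod_{j=1}^d(\nu_j+1)$ together with the characterization $\cH_{n}^{d}=\bigcup_{\#(\cR_\nu)\le n}\cR_\nu$ already recorded above. This identity holds because $\cR_\nu$ consists of exactly the $\mu$ with $0\le\mu_j\le\nu_j$ for every $j$, leaving $\nu_j+1$ choices in each coordinate.

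For the inclusion $\bigcup_{\Lambda\in\cM_{n}^{d}}\Lambda\subseteq\cH_{n}^{d}$, I would take any $\nu$ belonging to some downward closed $\Lambda$ with $\#(\Lambda)=n$. Downward closedness forces $\cR_\nu\subseteq\Lambda$, whence $\prod_{j=1}^d(\nu_j+1)=\#(\cR_\nu)\le\#(\Lambda)=n$, which is precisely the defining condition for $\nu\in\cH_{n}^{d}$.

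For the reverse inclusion I would start from $\nu\in\cH_{n}^{d}$, so that $\#(\cR_\nu)\le n$. The set $\cR_\nu$ is itself downward closed and contains $\nu$; if $\#(\cR_\nu)=n$ then $\cR_\nu\in\cM_{n}^{d}$ and we are done. Otherwise $\#(\cR_\nu)<n$, and I would enlarge $\cR_\nu$ into a downward closed set of cardinality exactly $n$ by repeatedly adjoining a minimal element of the complement $\cF\setminus\cR_\nu$, which is nonempty since $\cF$ is infinite. This places $\nu$ inside some $\Lambda\in\cM_{n}^{d}$, giving $\nu\in\bigcup_{\Lambda\in\cM_{n}^{d}}\Lambda$.

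The only point requiring a genuine (though routine) argument is this extension step, which I expect to be the main obstacle since it is what reconciles the ``cardinality exactly $n$'' definition of $\cM_{n}^{d}$ with the ``at most $n$'' phrasing of the statement. I would justify it by noting that the component-wise order on $\cF=\Nset_0^d$ is well-founded, so the nonempty set $\cF\setminus S$ admits a minimal element $\mu$ whenever $S$ is a finite proper downward closed subset; by minimality every index strictly below $\mu$ already lies in $S$, so $S\cup\{\mu\}$ remains downward closed and has one more element. Iterating raises the cardinality to $n$ while preserving both downward closedness and the inclusion $\cR_\nu\subseteq\Lambda$.
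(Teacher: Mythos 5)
Your proof is correct and follows essentially the same route as the paper: both directions rest on the rectangles $\cR_\nu$, using $\cR_\nu\subseteq\Lambda$ for any downward closed $\Lambda$ containing $\nu$ in one direction, and the fact that $\cR_\nu$ is itself downward closed in the other. The one point where you go beyond the paper is the explicit extension of $\cR_\nu$ to a downward closed set of cardinality exactly $n$ via minimal elements of the complement; the paper simply asserts that such rectangles ``belong to $\cM_n^d$'' and glosses over the discrepancy between ``at most $n$'' and ``exactly $n$,'' so your extra care here is warranted and correct.
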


\begin{proof}
On the one hand, all rectangles $\cR_\nu$ such that $\#(\cR_\nu) \leq n$ belong to $ \cM_{n}^{d}$, so that inclusion holds from right to left. On the other hand, inclusion from left to right follows by
observing that for any $\Lambda \in \cM_{n}^{d}$, one has $\Lambda=\cup_{\mu\in \Lambda} \cR_\mu$
and $\cR_\mu\subset  \cH_{n}^{d}$ for all $\mu \in \Lambda$.
\end{proof}

This leads us to a first probabilistic bound for the random variable $\costgen_n^d$. Indeed, using \eqref{unionlower} we obtain that 
\begin{align*}
  \Pr\left(\costgen_n^d > 2 \right) 
&= 
\Pr\left( \max_{\Lambda \in \cM_{n}^{d}} \max_{\genpol \in \P_\Lambda} \frac{\| \genpol \|^2}{\| \genpol \|_m^2} > 2 \right) 
\le 
\Pr\left(\max_{ \genpol \in\P_{\cH_{n}^{d}}} \frac{\| \genpol \|^2}{\| \genpol \|_m^2} >2 \right).
\end{align*}
Thus, using Theorem~\ref{thm:theo_prev_analysis} with $\delta=1/2$
combined with the estimates in Lemma \ref{BoundJacobi},
we obtain that, in any dimension $d$ and for any $r>0$, if $m$ and $n$ satisfy 
\begin{equation}
\label{condK2}
\frac m {\ln m} \geq
\begin{cases} 
\frac{(1+r)}{\zeta} (\#(\cH_{n}^{d}))^{\ln 3 / \ln 2},   &  \textrm{ with Chebyshev polynomials of the first kind}, 
\\
\frac{(1+r)}{\zeta} (\#(\cH_{n}^{d}))^{2\max\{\theta_1,\theta_2 \} +2 },   &
\textrm{ with Jacobi polynomials and } \theta_1,\theta_2 \in \Nset_0,
\end{cases}
\end{equation}
then
\beu
  \Pr(\costgen_{n}^{d}>2) \le 2m^{ - r }.
\eeu

From \eqref{unionlower} and \eqref{supportanchored} we also find that
the union of all anchored sets of cardinality at most $n$ satisfies the following inclusion
\beu
  \bigcup_{\Lambda \in \cA_n} \Lambda \subset \cH_n^{n-1}. 
\eeu
By similar arguments, we obtain the following probabilistic bound
for the random variable $\wt \costgen_{n}^{}$: in any dimension $d$,
for any $r>0$, if $m$ and $n$ satisfy 
\begin{equation}
\label{condK2_anch}
\frac m {\ln m} \geq
\begin{cases} 
\frac{(1+r)}{\zeta} (\#(\cH_{n}^{n-1 }))^{\ln 3 / \ln 2},   &  \textrm{ with Chebyshev polynomials of the first kind}, 
\\
\frac{(1+r)}{\zeta} (\#(\cH_{n}^{n-1}))^{2\max\{\theta_1,\theta_2 \} +2 },   &
\textrm{ with Jacobi polynomials and } \theta_1,\theta_2 \in \Nset_0,
\end{cases}
\end{equation}
then
\begin{equation*}
  \Pr(\wt \costgen_{n }^{}>2) \le 2m^{ - r }.
\end{equation*}

The above results describe the regimes of $m$ and $n$ such that
 $\costgen_{n}^{d}$ and $\wt \costgen_{n}^{}$
do not exceed $2$ with high probability. In the case
of downward closed sets, this regime is quite restrictive
due to the presence of $\ln(n)^{d-1}$ in the 
upper bound \eqref{eq:up_bound_hc} for the cardinality of $\cH_{n}^{d}$,
which enforces the sample size $m$ to be extremely large as $d$ grows.
Likewise, $m$ has to be extremely large compared to $n$ in the case
of anchored sets.

We next describe another strategy which yields similar probabilistic bounds
under less restrictive regimes. It is based on estimating the cardinality of $\cM_n^d$ and $\cA_n^{}$ and using union bounds.  
Our way of estimating $\#(\cM_{n}^{d})$ and $\#(\cA_n^{})$ is based on strategies for encoding any downward closed or anchored set. One first such strategy leads to the following result.

\begin{lemma}[First cardinality bound]
\label{lemmacard}
One has the cardinality estimates
\be
\#(\cM_{n}^{d}) \leq 2^{n d},
\label{cardMnd}
\ee
and
\be
\#(\cA_{n}^{}) \leq 2^{n(n-1)}.
\label{cardAn}
\ee
\end{lemma}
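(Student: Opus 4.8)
The plan is to prove the bound \eqref{cardMnd} for general downward closed sets by induction on the dimension $d$, and then to obtain \eqref{cardAn} as an immediate corollary via the support restriction \eqref{supportanchored} of anchored sets. The inductive step rests on slicing a downward closed set along its last coordinate. Given $\Lambda\in\cM_n^d$, I would set $\Delta:=\{\mu\in\Nset_0^{d-1}:(\mu,0)\in\Lambda\}$ for the projection onto the first $d-1$ coordinates, and record a ``height'' $h(\mu):=\max\{t:(\mu,t)\in\Lambda\}$ for each $\mu\in\Delta$. Downward closedness of $\Lambda$ translates exactly into the following: $\Delta$ is downward closed in $\Nset_0^{d-1}$, the map $h:\Delta\to\Nset_0$ is antitone (i.e.\ $\mu\le\mu'\Rightarrow h(\mu)\ge h(\mu')$), and $\Lambda=\{(\mu,t):\mu\in\Delta,\ 0\le t\le h(\mu)\}$. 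Hence $\Lambda\mapsto(\Delta,h)$ is injective (one recovers $\Lambda$ from the pair), and $\sum_{\mu\in\Delta}(h(\mu)+1)=\#(\Lambda)=n$. This decomposition can be read as an explicit encoding of $\Lambda$, in the spirit announced before the lemma.

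The key step is then a purely combinatorial count of the admissible labels. Writing $n_0:=\#(\Delta)\ge 1$ (since $0\in\Delta$), the constraint becomes $\sum_{\mu\in\Delta}h(\mu)=n-n_0$, and since antitone functions form a subset of all nonnegative integer-valued functions on the $n_0$-element set $\Delta$ with this prescribed sum, a stars-and-bars count bounds their number by $\binom{n-1}{n_0-1}$, independently of the shape of $\Delta$. Summing over the possible projections $\Delta\in\cM_{n_0}^{d-1}$ and over $n_0=1,\dots,n$, and invoking the inductive hypothesis $\#(\cM_{n_0}^{d-1})\le 2^{n_0(d-1)}$, I would arrive at
\beu
\#(\cM_n^d)\le\sum_{n_0=1}^n 2^{n_0(d-1)}\binom{n-1}{n_0-1}=2^{d-1}\bigl(1+2^{d-1}\bigr)^{n-1},
\eeu
the last equality being the binomial theorem after shifting the summation index. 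The base case $d=1$ is immediate, as $\cM_n^1=\{\{0,\dots,n-1\}\}$ has a single element. The crux of the whole argument is the elementary estimate $1+2^{d-1}\le 2^d$, which collapses the right-hand side to $2^{d-1}\,2^{d(n-1)}=2^{nd-1}\le 2^{nd}$ and closes the induction.

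Finally, \eqref{cardAn} follows without further work: by \eqref{supportanchored} every $\Lambda\in\cA_n$ satisfies $\supp(\Lambda)\subseteq\{1,\dots,n-1\}$, so it may be viewed as a downward closed subset of $\Nset_0^{n-1}$ of cardinality $n$; this yields an injection $\cA_n\hookrightarrow\cM_n^{n-1}$ and hence $\#(\cA_n)\le\#(\cM_n^{n-1})\le 2^{n(n-1)}$ by the case $d=n-1$ of \eqref{cardMnd}. I expect the main obstacle to lie in the bookkeeping of the inductive step: specifically, recognizing that relaxing the antitone constraint on $h$ to an arbitrary nonnegative labeling with fixed sum is exactly the right amount of slack to turn the recursion into a clean binomial sum, while still keeping the dominant factor at $2^{nd}$ rather than something larger.
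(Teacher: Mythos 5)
Your proof is correct, but it follows a genuinely different route from the paper. The paper's argument is a direct encoding: it assigns to each $\nu\in\Lambda$ a block of $d$ bits recording which successors $\nu+e_j$ lie in $\Lambda$, orders the blocks lexicographically, and shows the resulting bitstream of length $nd$ determines $\Lambda$ (the decodability being the only point requiring care); the bound $2^{nd}$ then falls out immediately, and \eqref{cardAn} follows from $\#(\cA_n)\le\#(\cM_n^{n-1})$ exactly as in your last step. Your argument instead inducts on the dimension, slicing $\Lambda$ along the last coordinate into a downward closed projection $\Delta\subset\Nset_0^{d-1}$ and an antitone height function $h$, and then discarding the antitonicity to get the stars-and-bars count $\binom{n-1}{n_0-1}$; I checked the recursion and the resulting closed form $2^{d-1}(1+2^{d-1})^{n-1}\le 2^{nd-1}$, and all steps are sound (including the base case $d=1$ and the fact that $0\in\Delta$ guarantees $n_0\ge 1$). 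What each approach buys: the paper's encoding is shorter and generalizes verbatim to the anchored case without passing through an induction, while your decomposition is more self-contained (no decodability argument needed), yields the marginally sharper intermediate bound $2^{d-1}\bigl(1+2^{d-1}\bigr)^{n-1}$, which for large $d$ behaves like $2^{(d-1)n}$ rather than $2^{dn}$, and makes the structure of downward closed sets (as graphs of antitone functions over lower-dimensional downward closed sets) explicit. Since the lemma is only used through the crude exponent $nd$ in the union bounds of Section 3.2, neither refinement changes the downstream conditions, and in fact both bounds are superseded there by Lemma \ref{lemmacardimp}.
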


\begin{proof} We encode any downward closed set $\Lambda\subset \cF$ of cardinality $n$ in $d$ dimensions, 
by associating $d$ bits $b_{\nu,1},\dots,b_{\nu,d}$ to each multi-index $\nu\in\Lambda$,  
where the value of the $j$th bit is equal to one if $\nu + e_j \in \Lambda$, and 
equal to zero if $\nu + e_j \notin \Lambda$. We order these blocks of bits 
according to the lexicographic order $\nu^1,\dots, \nu^n$ of appearance of $\nu$ in $\Lambda$,
where $\nu^1=(0,\dots,0)$. The resulting bitstream
\beu
B_\Lambda := (b_{\nu^1,1},\dots ,b_{\nu^1,d},b_{\nu^2,1},\dots,b_{\nu^2,d},\dots,b_{\nu^n,1},\dots ,b_{\nu^n,d}),
\eeu
uniquely encodes $\Lambda$, that is, the encoding map $\Lambda\mapsto B_\Lambda$ is injective.
Indeed, assuming that $\nu^1,\dots,\nu^k$ have been already identified, then the partial bitstream
$(b_{\nu^1,1},\dots  ,b_{\nu^k,d})$ contains the information on the positions of all indices
$\nu\in\Lambda$ such that $\nu\notin\{\nu^1,\dots,\nu^k\}$ and $\nu-e_j\in\{\nu^1,\dots,\nu^k\}$
for some $j$. Therefore $\nu^{k+1}$ is uniquely determined as the index 
with smallest lexicographic order among such indices. 

Since the length of $B_\Lambda$ is $nd$, this model leads us to the upper bound
\eqref{cardMnd}. The same encoding can be applied to anchored sets of cardinality
at most $n$ with $n-1$ bits for each index, leading to \eqref{cardAn}, which also
directly follows from \eqref{cardMnd} and the fact that
$\#(\cA_{n}^{}) \leq \#(\cM_{n}^{n-1})$.
\end{proof}

\begin{remark}
Note that the cardinality estimate for the anchored sets is independent of the dimension $d$. In particular,
this allows us to derive some further results in the infinite-dimensional framework, when using
anchored sets.
\end{remark}

Recalling the definition of $K(\P_\Lambda)$ from \eqref{defKVm}, 
we introduce the following notation: 
\beu
K_n=\max_{\Lambda \in \cM_{n}^{d} } K(\P_\Lambda),
\eeu
\beu
\wt K_n=\max_{\Lambda \in \cA_{n}^{} } K(\P_\Lambda).
\eeu
Note that, according to Lemma \ref{BoundJacobi}, one has the estimate
\be
\wt K_n\leq K_n\leq 
\begin{cases}
n^{\ln 3 / \ln 2}, & \textrm{ if } \paraone=\paratwo=-1/2, \\
n^{2\max\{\paraone,\paratwo\}+2}
& \textrm{ if } \paraone,\paratwo\in\Nset_0.
\end{cases}
\label{boundjacobi2}
\ee
We are now in position to establish probabilistic bounds for the random variable $C^d_n$ and $\wt C_n$.

\begin{lemma}
In any finite dimension $d$, for any $r>0$, if $m$ and $n$ satisfy 
\be
\label{condK2_bitstreamgen}
\frac m {\ln m} \geq
\left(
1+r+\dfrac{ n d \ln 2}{\ln m} 
\right)   
\dfrac{K_n }{\zeta},  \ee
then 
\beu
\Pr\left( \costgen_{n}^{d}
>2 
  \right) \leq
2 n m^{-(r+1)}
\leq 2 m^{-r}.
\eeu
In any finite dimension $d$ or in infinite dimension,
for any $r>0$, if $m$ and $n$ satisfy 
\be
\label{condK2_bitstream_anchgen}
\frac m {\ln m} \geq
\left( 
1+r+\dfrac{  n^2  \ln 2}{\ln m} 
\right)
\dfrac{ \wt K_n}{\zeta},   
\ee
then 
\beu
\Pr\left(\wt \costgen_{n}
>2 
  \right) \leq
2 n m^{-(r+1)}\leq 2 m^{-r}.
\eeu
\end{lemma}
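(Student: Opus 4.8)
The plan is to combine the single-set stability result from Theorem~\ref{thm:theo_prev_analysis} with the cardinality estimates of Lemma~\ref{lemmacard} via a union bound over all admissible index sets. The key observation is that the event $\{\costgen_n^d > 2\}$ is the event that, for \emph{some} $\Lambda \in \cM_n^d$, there exists $\genpol \in \P_\Lambda$ with $\|\genpol\|^2 > 2\|\genpol\|_m^2$, i.e. with $\|\genpol\|_m^2 < \tfrac12 \|\genpol\|^2$. This is precisely the failure of the lower stability inequality $(1-\delta)\|\genpol\|^2 \le \|\genpol\|_m^2$ for $\delta = 1/2$ on the space $\P_\Lambda$. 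So I would first write
\beu
\Pr\left(\costgen_n^d > 2\right) \le \sum_{\Lambda \in \cM_n^d} \Pr\left(\max_{\genpol \in \P_\Lambda} \frac{\|\genpol\|^2}{\|\genpol\|_m^2} > 2\right),
\eeu
invoking the union bound over the finite family $\cM_n^d$.

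Next I would bound each term on the right. For a fixed downward closed $\Lambda$ of cardinality $n$, the stability estimate \eqref{eq:stability_norm0} in Theorem~\ref{thm:theo_prev_analysis} with $\delta = 1/2$ gives that the failure of $(1-\delta)\|\genpol\|^2 \le \|\genpol\|_m^2 \le (1+\delta)\|\genpol\|^2$ for all $\genpol \in \P_\Lambda$ has probability at most $2n\exp(-\zeta m / K(\P_\Lambda))$. Since the event $\{\costgen_n^d > 2\}$ concerns only the lower bound, this upper bound certainly controls each summand, and replacing $K(\P_\Lambda)$ by its maximum $K_n$ over $\cM_n^d$ only enlarges the exponential, so each term is at most $2n\exp(-\zeta m / K_n)$. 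Combining with Lemma~\ref{lemmacard} I get
\beu
\Pr\left(\costgen_n^d > 2\right) \le 2n\,\#(\cM_n^d)\exp(-\zeta m / K_n) \le 2n\, 2^{nd}\exp(-\zeta m / K_n).
\eeu

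The final step is to verify that condition \eqref{condK2_bitstreamgen} forces the right-hand side below $2nm^{-(r+1)}$. I would take logarithms: the desired bound $2^{nd}\exp(-\zeta m/K_n) \le m^{-(r+1)}$ is equivalent to $\zeta m / K_n \ge (r+1)\ln m + nd\ln 2$, and dividing by $\zeta m / K_n$ and rearranging shows this is exactly $\frac{m}{\ln m} \ge \left(1 + r + \frac{nd\ln 2}{\ln m}\right)\frac{K_n}{\zeta}$, which is \eqref{condK2_bitstreamgen}. The second-to-last inequality $2nm^{-(r+1)} \le 2m^{-r}$ holds trivially since $n \le m$ under the hypothesis (as $m/\ln m$ dominates a quantity growing at least like $n$). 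The anchored case is handled identically, using $\#(\cA_n) \le 2^{n(n-1)}$ and $\wt K_n$ in place of $2^{nd}$ and $K_n$; the independence of the anchored cardinality bound from $d$ is what permits the infinite-dimensional statement, since no step then refers to $d$.

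I do not anticipate a serious obstacle here: the argument is essentially a union bound wrapped around the already-established single-set estimate, and the only care needed is bookkeeping in the passage from the exponential tail bound to the logarithmic form of the condition, together with checking that maximizing $K(\P_\Lambda)$ over the family is harmless because a smaller $K$ only improves the per-set bound. The one point worth stating cleanly is that \eqref{eq:stability_norm0} already holds for an arbitrary fixed downward closed $\Lambda$ without any condition relating $m$ and $K$, so the union bound can be applied before imposing \eqref{condK2_bitstreamgen}, and the condition is used only at the very end to collapse the combinatorial factor $2^{nd}$ against the exponential decay.
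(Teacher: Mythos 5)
Your proposal is correct and follows essentially the same route as the paper: a union bound over $\cM_n^d$ (resp.\ $\cA_n$), the per-set stability estimate \eqref{eq:stability_norm0} with $\delta=1/2$ and $K(\P_\Lambda)\le K_n$, the cardinality bounds of Lemma~\ref{lemmacard}, and the observation that condition \eqref{condK2_bitstreamgen} absorbs the factor $2^{nd}$ into the exponential. Your explicit verification of the final algebraic rearrangement and of $n\le m$ is sound and merely spells out steps the paper leaves implicit.
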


\begin{proof}
Recalling \eqref{eq:stability_norm0} and using a union bound, we obtain 
\begin{align*}
\Pr\left( \costgen_{n}^{d}   >2  \right) & = 
\Pr\left( \max_{\Lambda \in \cM_{n}^{d}} \max_{\genpol \in \P_\Lambda} \frac{\| \genpol \|^2}{\| \genpol \|_m^2} > 2 \right) 
\nonumber \\
&  \le 
\sum_{\Lambda \in \cM_n^d} 
\Pr\left(
\max_{ \genpol \in \P_{ \Lambda }} \frac{\| \genpol \|^2}{\| \genpol \|_m^2} >2 
\right) \nonumber  \\
& \le 
2^{n d} 
2 n 
\exp\left\{ - \zeta m /  K_n  \right\}  \nonumber \\
&  = 
2 n 
\exp\left\{ - \zeta m /  K_n    + n d  \ln(2)  \right\},
\end{align*}
where we have used the cardinality estimate \eqref{cardMnd}.
The final bound is smaller than $2n^{-r}$ under condition 
\eqref{condK2_bitstreamgen}.  The proof for $\wt C_n$ is completely similar,
using the cardinality estimate \eqref{cardAn}.
\end{proof}

\begin{remark}
Combining with \eqref{boundjacobi2}, we find that 
\eqref{condK2_bitstreamgen} holds if $r$, $d$, $m$ and $n$ satisfy 
\be
\label{condK2_bitstream}
\frac m {\ln m} \geq
\begin{cases} 
\left(
1+r+\dfrac{ n d \ln 2}{\ln m} 
\right)   
\dfrac{n^{\ln 3 / \ln 2} }{\zeta},   &  \textrm{ with Chebyshev polynomials of the first kind}, 
\\
\left(
1+r+\dfrac{ n d \ln 2 }{\ln m} 
\right)
\dfrac{
n^{2\max\{\theta_1,\theta_2 \} +2}
}{\zeta}
,  &
\textrm{ with Jacobi polynomials and } \theta_1,\theta_2 \in \Nset_0.
\end{cases}
\ee
Similarly, we find that \eqref{condK2_bitstream_anchgen}
holds 
if $r$, $m$ and $n$ satisfy 
\be
\label{condK2_bitstream_anch}
\frac m {\ln m} \geq
\begin{cases} 
\left( 
1+r+\dfrac{  n^2  \ln 2}{\ln m} 
\right)
\dfrac{ n^{\ln 3 / \ln 2}}{\zeta},   &  \textrm{ with Chebyshev polynomials of the first kind}, 
\\
\left(
1+r+\dfrac{  n^2 \ln 2 }{\ln m} 
\right)
\dfrac{ n^{2\max\{\theta_1,\theta_2 \} +2}  }{\zeta},  &
\textrm{ with Jacobi polynomials and } \theta_1,\theta_2 \in \Nset_0.
\end{cases}
\ee
\end{remark}

\noindent
We next give an improved result on the cardinality of $\cM_n^d$ and
$\cA_n$ based on another encoding strategy.

\begin{lemma}[Second cardinality bound]
\label{lemmacardimp}
One has the cardinality estimates
\be
\#(\cM_{n}^{d}) \leq  d^{n -1} (n-1)!,
\label{cardMndimp}
\ee
and
\be
\#(\cA_{n}^{}) \leq \left( \left( n-1 \right)! \right)^2.
\label{cardAnimp}
\ee
\end{lemma}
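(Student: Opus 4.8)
The plan is to sharpen the crude bit-stream encoding of Lemma~\ref{lemmacard} into a more economical one that records, for a suitable construction order of the set, how each new multi-index is built from the previously listed ones. For a downward closed $\Lambda\in\cM_n^d$, I would first fix an enumeration $\nu^1,\dots,\nu^n$ of its elements that is a linear extension of the componentwise partial order, so that $\nu^1=(0,\dots,0)$ and every prefix $\{\nu^1,\dots,\nu^\ell\}$ is itself downward closed. Since each $\nu^{\ell+1}$ is nonzero, it has a positive coordinate $j$, and downward closedness of the prefix forces $\nu^{\ell+1}-e_j$ to be one of the earlier indices $\nu^i$ with $i\le\ell$. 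Thus the added index is encoded by a pair $(i,j)$ with $i\in\{1,\dots,\ell\}$ and $j\in\{1,\dots,d\}$, i.e.\ by at most $\ell\, d$ possibilities at the step that brings the count to $\ell+1$. A construction sequence uniquely determines its resulting set by replaying the additions, so distinct sets require distinct sequences, giving $\#(\cM_n^d)\le\prod_{\ell=1}^{n-1}\ell\, d=d^{\,n-1}(n-1)!$, which is \eqref{cardMndimp}.

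For anchored sets the same scheme applies, but I would exploit that the support is always an initial segment to cut down the number of admissible directions. The key point is that an anchored set of cardinality $\ell$ has support contained in $\{1,\dots,\ell-1\}$; hence, if the construction is carried out so that every prefix is again anchored, then adding the $(\ell+1)$-th index can activate a coordinate $j$ that is at most one beyond the current contiguous support, so $j\le\ell$. Combined with the $\le\ell$ choices for $i$, each step then contributes a factor $\ell^2$, yielding $\#(\cA_n)\le\prod_{\ell=1}^{n-1}\ell^2=\bigl((n-1)!\bigr)^2$, which is \eqref{cardAnimp}. Note that this count is independent of $d$, consistent with the infinite-dimensional setting.

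The step that requires care, and the main obstacle, is producing for an arbitrary anchored $\Lambda$ an enumeration all of whose prefixes are anchored (not merely downward closed), since a downward closed prefix of an anchored set need not have contiguous support. I would establish this by induction, showing that every anchored set with $n\ge2$ has a maximal element whose removal leaves an anchored set; reversing such removals yields the required order. To find a removable maximal element, I would argue that removing a maximal $\nu$ preserves downward closedness automatically and can delete a coordinate $j$ from the support only when $\nu=e_j$, because otherwise $e_j$ itself (present by anchoredness) still activates $j$. Consequently, if $\Lambda$ has a maximal element that is not a unit vector, its removal leaves the support unchanged and hence anchored; and in the remaining case every maximal element is a unit vector, which forces $\Lambda=\{0,e_1,\dots,e_t\}$, so that removing $e_t$ returns the anchored set $\{0,e_1,\dots,e_{t-1}\}$. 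Granting this, the contiguity of the support at each stage justifies the bound $j\le\ell$, and the two estimates follow.
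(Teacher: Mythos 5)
Your proof is correct. For \eqref{cardMndimp} it coincides with the paper's argument: both encode a downward closed $\Lambda$ by a construction order in which every prefix is downward closed, and record each new index as $\nu^{l}+e_{j}$ with $l$ pointing to an earlier element and $j\in\{1,\dots,d\}$, giving $\prod_{\ell=1}^{n-1}\ell d=d^{n-1}(n-1)!$. For \eqref{cardAnimp} you diverge, and in a useful way: the paper simply invokes $\#(\cA_n)\leq\#(\cM_n^{n-1})$, but substituting $d=n-1$ into \eqref{cardMndimp} yields $(n-1)^{n-1}(n-1)!$, which is weaker than $\left((n-1)!\right)^2$ for $n\geq 3$; the stated bound really requires the refinement you supply, namely that the construction can be arranged so that every prefix is itself anchored, whence by \eqref{supportanchored} the coordinate $j$ used at the step producing the $(\ell+1)$-th element satisfies $j\leq\ell$, giving a factor $\ell^2$ per step. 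Your induction showing that every anchored set of cardinality at least $2$ has a maximal element whose removal leaves an anchored set (a non-unit-vector maximal element if one exists, otherwise the top unit vector of $\{0,e_1,\dots,e_t\}$) is exactly the point the paper glosses over, and it is sound: removing a maximal element always preserves downward closedness, and it can shrink the support only when that element is itself a unit vector, since otherwise the unit vectors guaranteed by anchoredness keep every coordinate active. So your write-up is not only correct but closes a small gap in the published derivation of \eqref{cardAnimp}.
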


\begin{proof}
Given any downward closed multi-index set $\Lambda$ with $\#(\Lambda)=n$, we order the elements of $\Lambda$ in such a way that the set 
\beu
\Lambda^k:=
\{ 
\nu^1,\ldots,\nu^{k}
 \},
\eeu
obtained by retaining only the first $k$ elements of $\Lambda$, 
is downward closed for any $k=1,\ldots,n$. Such an ordering always exists,  
 and in general it is not unique. 
Notice that 
it always holds $\nu^1=(0,\dots,0)$. 
One way to impose a unique ordering is by taking for $\nu^k$ 
the smallest index $\nu$ in lexicographic order among those
$\nu\in \Lambda\setminus\{\nu^1,\dots,\nu^{k-1}\}$ such that 
$\{ \nu^1,\ldots,\nu^{k}\}$ is downward closed. Each $\nu^{k}$ can be uniquely 
characterized by choosing
some $l_k\in \{1,\dots,k-1\}$ and $j_k\in \{1,\dots,d\}$ such that
\beu
\nu^{k}=\nu^{l_k} + e_{j_k}.
\eeu
Again this choice can be made unique by asking that $j_k$ is the
smallest number with such a property. Therefore, the resulting map
\beu
\Lambda \mapsto  (j_2,l_3,j_3,\dots,l_n,j_n),
\eeu
is well defined and injective. Hence 
\beu
\#(\cM_{n}^{d}) \leq  d (2 d )\cdots (n -1) d,
\eeu
which is \eqref{cardMndimp}. We obtain \eqref{cardAnimp}
from the inequality $\#(\cA_n)\leq \#(\cM_{n}^{n-1})$. 
\end{proof}

The above results lead to improved probabilistic bounds for the
random variables $C^d_n$ and $\wt C_n$.

\begin{lemma}
In any finite dimension $d$, for any $r>0$, if $m$ and $n$ satisfy 
\be
\label{condK2_bitstreamgenimp}
\frac m {\ln m} \geq
\left(1+r+
\dfrac{n
\ln(dn)}{\ln m}
\right) 
\dfrac{K_n }{\zeta},  \ee
then 
\beu
\Pr\left( \costgen_{n}^{d}
>2 
  \right) \leq
2 n m^{-(r+1)}
\leq 2 m^{-r}.
\eeu
In any finite dimension $d$ or in infinite dimension,
for any $r>0$, if $m$ and $n$ satisfy 
\be
\label{condK2_bitstream_anchgenimp}
\frac m {\ln m} \geq
\left( 
1+r+\dfrac{  2n  \ln n}{\ln m} 
\right)
\dfrac{ \wt K_n}{\zeta},   
\ee
then 
\beu
\Pr\left(\wt \costgen_{n}
>2 
  \right) \leq
2 n m^{-(r+1)}\leq 2 m^{-r}.
\eeu
\end{lemma}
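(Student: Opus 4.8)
The plan is to reproduce the union‑bound argument already used for the first cardinality bound, feeding in the sharper estimates \eqref{cardMndimp} and \eqref{cardAnimp} of Lemma~\ref{lemmacardimp} in place of \eqref{cardMnd} and \eqref{cardAn}. I would first observe that, since $\delta=1/2$ makes $1-\delta=1/2$, the event $\{\costgen_n^d>2\}$ forces the lower stability bound $\|\genpol\|_m^2\ge(1-\delta)\|\genpol\|^2$ to fail for some $\genpol\in\P_\Lambda$ and some $\Lambda\in\cM_n^d$. Hence by \eqref{eq:stability_norm0}, with prefactor exactly $2n$ because every $\Lambda\in\cM_n^d$ has cardinality $n$, and using $K(\P_\Lambda)\le K_n$, each fixed $\Lambda$ contributes at most $2n\exp(-\zeta m/K_n)$. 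A union bound over $\cM_n^d$ together with \eqref{cardMndimp} then gives
\beu
\Pr\left(\costgen_n^d>2\right)\le 2n\,\exp\!\left(-\frac{\zeta m}{K_n}+\ln\!\big(d^{n-1}(n-1)!\big)\right).
\eeu

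The decisive step is to turn the combinatorial factor into the logarithmic term of \eqref{condK2_bitstreamgenimp}. I would use $(n-1)!\le(n-1)^{n-1}$ to get
\beu
\ln\!\big(d^{n-1}(n-1)!\big)=(n-1)\ln d+\ln\big((n-1)!\big)\le(n-1)\ln\!\big(d(n-1)\big)\le n\ln(dn),
\eeu
so that the exponent is bounded by $-\zeta m/K_n+n\ln(dn)$. A direct rearrangement then shows that \eqref{condK2_bitstreamgenimp} is exactly equivalent to $\zeta m/K_n\ge(1+r)\ln m+n\ln(dn)$, i.e. to this exponent being at most $-(1+r)\ln m$; this delivers $\Pr(\costgen_n^d>2)\le 2n\,m^{-(r+1)}$, and the final inequality $2n\,m^{-(r+1)}\le 2m^{-r}$ follows from $n\le m$, which is guaranteed by the hypothesis together with $K_n\ge n$ from Lemma~\ref{BoundJacobi}.

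For the anchored case I would run the identical argument with $\cA_n$, $\wt K_n$ and \eqref{cardAnimp} in place of $\cM_n^d$, $K_n$ and \eqref{cardMndimp}, reaching $\Pr(\wt\costgen_n>2)\le 2n\exp(-\zeta m/\wt K_n+2\ln((n-1)!))$, and then bounding $2\ln((n-1)!)\le 2(n-1)\ln(n-1)\le 2n\ln n$ to recover the term appearing in \eqref{condK2_bitstream_anchgenimp}. Because \eqref{cardAnimp} carries no dependence on $d$, this half of the proof remains valid in infinite dimension as well. I do not anticipate a genuine obstacle here: the entire combinatorial difficulty has been absorbed into the encoding of Lemma~\ref{lemmacardimp}, which I take as given, so that the only point needing mild care is the elementary factorial estimate above, and the rest is a transcription of the earlier union‑bound computation.
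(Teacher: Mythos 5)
Your proposal is correct and follows essentially the same route as the paper: a union bound over $\cM_n^d$ (resp.\ $\cA_n$) using the stability estimate \eqref{eq:stability_norm0} with $\delta=1/2$, the cardinality bounds \eqref{cardMndimp} and \eqref{cardAnimp}, and $K(\P_\Lambda)\le K_n$, reducing the combinatorial factor to $n\ln(dn)$ (resp.\ $2n\ln n$) in the exponent. The only cosmetic difference is that you bound $(n-1)!\le(n-1)^{n-1}$ directly, whereas the paper uses the Stirling-type inequality $n!\le e\sqrt{n}(n/e)^n$; both land on the same bound, and your variant is slightly more elementary.
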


\begin{proof}
Recalling \eqref{eq:stability_norm0}  
and using the inequality $n ! \leq e \sqrt{n} (n/e)^{n}$
which holds for any $n\geq 1$, we obtain by means of a union bound  
that, for any $n\geq 2$, 
\begin{align*}
\Pr\left( \costgen_{n}^{d}
 >2 
\right) 
& \leq 
d^{n -1} (n-1)!
\ 
(2 n)
\
\exp\left\{ - \zeta m /  K_n  \right\}
\nonumber \\
& \leq d^{n -1} 
e \sqrt{n-1} \left( \dfrac{n-1}{e}\right)^{n-1}
\ 
(2 n) 
\
\exp\left\{ - \zeta m /  K_n  \right\}
\nonumber \\
& 
=
2 n
\exp\left\{ - \zeta m /  K_n   + 
(n-1/2) 
\ln\left(
\dfrac{
d (n-1)
}{e} 
     \right)
     - \dfrac{1}{2} \ln \left( \dfrac{d}{e} \right)  
     + 1
\right\}
\nonumber\\
& 
\leq 
2 n
\exp\left\{ - \zeta m /  K_n   + 
n
\ln(dn)
\right\},
\nonumber
\end{align*}
where we have used the cardinality estimate \eqref{cardMndimp}.
The final bound is smaller than $2n^{-r}$ under condition 
\eqref{condK2_bitstreamgenimp}.  
Trivially the final bound holds true also when $n=1$. 
The proof for $\wt C_n$ is completely similar,
using the cardinality estimate \eqref{cardAnimp}.
\end{proof}

\begin{remark}
Combining with \eqref{boundjacobi2}, we find that 
\eqref{condK2_bitstreamgenimp} holds if $r$, $d$, $m$ and $n$ satisfy 
\be
\label{condK2_bitstreamimp}
\frac m {\ln m} \geq
\begin{cases} 
\left(
1+r+\dfrac{ n \ln(dn)}{\ln m} 
\right)   
\dfrac{n^{\ln 3 / \ln 2} }{\zeta},   &  \textrm{ with Chebyshev polynomials of the first kind}, 
\\
\left(
1+r+\dfrac{ n  \ln (dn) }{\ln m} 
\right)
\dfrac{
n^{2\max\{\theta_1,\theta_2 \} +2}
}{\zeta}
,  &
\textrm{ with Jacobi polynomials and } \theta_1,\theta_2 \in \Nset_0.
\end{cases}
\ee
Similarly, we find that \eqref{condK2_bitstream_anchgenimp}
holds 
if $r$, $m$ and $n$ satisfy 
\be
\label{condK2_bitstream_anchimp}
\frac m {\ln m} \geq
\begin{cases} 
\left( 
1+r+\dfrac{  2n  \ln n}{\ln m} 
\right)
\dfrac{ n^{\ln 3 / \ln 2}}{\zeta},   &  \textrm{ with Chebyshev polynomials of the first kind}, 
\\
\left(
1+r+\dfrac{  2n\ln n }{\ln m} 
\right)
\dfrac{ n^{2\max\{\theta_1,\theta_2 \} +2}  }{\zeta},  &
\textrm{ with Jacobi polynomials and } \theta_1,\theta_2 \in \Nset_0.
\end{cases}
\ee
\end{remark}

The regimes of $m$ and $n$ described
by the above results are in principle less restrictive than 
those previously obtained using the cardinality of $\cH_{n}^{d}$ or $\cH_{n}^{n-1}$.
Indeed, for most regimes of $n$ and $d$ the cardinalities $\#(\cH_{n}^{d})$ or $\#(\cH_{n}^{n-1})$ 
are larger than $n\ln(dn)$ and $n\ln(n)$, respectively.
We may summarize the probabilistic bounds established in this section 
as follows: for any $r>0$ and any 
$n\geq 1$
one has $C_n^d\leq 2$ with probability larger than $1-2m^{-r}$
provided that \eqref{condK2} 
or \eqref{condK2_bitstream} or 
\eqref{condK2_bitstreamimp} holds. Likewise, one has $\wt C_n\leq 2$ with probability larger than $1-2m^{-r}$
provided that \eqref{condK2_anch}
or \eqref{condK2_bitstream_anch}
or \eqref{condK2_bitstream_anchimp}
holds.

\begin{remark}
The encoding strategies that are used for proving Lemma \ref{lemmacard} 
and Lemma \ref{lemmacardimp} are both redundant, leading to an overestimation
of $\#(\cM_n^d)$ and $\#(\cA_n)$. We are not aware of estimates for
these cardinalities which are provably sharp up to multiplicative constants
independent of $n$ and $d$. However, we can establish lower bounds
which show that for certain particular regimes of $n$ and $d$, these cardinalities grow 
exponentially fast. One
simple instance of a lower bound for $\#(\cM_n^d)$  in the regime where $n-1\leq d$ is obtained
as follows: we note that $\cM_n^d$ 
includes in particular all sets of the form $\{(0,\ldots,0)\}\cup \{e_j\,:\, j\in S\}$ for $S\subset \{1,\dots,d\}$
such that $\#(S)=n-1$. It follows that 
\beu
\#(\cM_n^d)\geq {d\choose n-1}.
\eeu
In the regime where $n=d/2$ (for even $d$), we thus find that
$\log_2(\#(\cM_n^d))$ grows at least as fast as $d$.
\end{remark}

\begin{remark}
\label{remcom}
It is interesting to compare the probabilistic bounds obtained in this section with 
Restricted Isometry Properties (RIP) recently obtained in \cite{CDTW}, that are a common vehicle
in the analysis of compressed sensing schemes.  Recalling the design matrix $\bD$
introduced in \eqref{design}, and defining its renormalized version $\Phi:=m^{-1/2} \bD$, we indeed see that the property $C_n^d \leq 2$ can be rephrased as
\beu
\frac 1 2 \|\ba\|^2 \leq \|\Phi \ba\|^2, \quad   \|\ba\|_{\ell^0_d}\leq n,
\eeu
that is, for all $\ba$ whose support is contained in a downward closed set of cardinality at most $n$.
In \cite{CDTW}, it is shown that the RIP property
\be
(1-\delta) \|\ba\|^2 \leq \|\Phi \ba\|^2\leq (1+\delta) \|\ba\|^2, \quad   \|\ba\|_{\ell^0_d}\leq n,
\label{rip}
\ee
holds with probability at least $1-\gamma$ if 
\be
m\geq 2^6 e\frac {K_n}{\t \delta^2}\ln\(\frac {K_n}{\t \delta^2}\)\max\Big\{\frac{2^5}{\t \delta^4}\ln\(40\frac {K_n}{\t \delta^2}\ln\(\frac {K_n}{\t \delta^2}\)\)\ln(4N),
\frac 1 {\t \delta}\ln\(\frac{1}{\gamma\t \delta}\ln\(\frac {K_n}{\t \delta^2}\)\) \Big\},\quad \t \delta:=\frac {\delta}{13},
\label{newregime}
\ee
where $N=\#(\cJ)$ with $\cJ$ the union of all downward closed sets of cardinality at most $n$,
that is, $\cJ=\cH_n^d$. 
Note that in our analysis, we are only interested in establishing the lower inequality
in the RIP, with the particular constant $\delta=\frac 1 2$. However, since 
we have started from the two-sided estimate in \eqref{eq:stability_norm0}, one easily checks that 
the same analysis leads to the validity of the RIP property \eqref{rip} 
with probability at least $1-\gamma$, under the 
regime
\be
\label{condK2_altbitstream_lowerdelta} 
\frac m {\ln m} \geq
\left(1+ \frac {\ln (2/\gamma)}{\ln m}+
\dfrac{n
\ln(dn)}{\ln m}
\right) 
\dfrac{K_n}{\zeta(\delta)},
\ee
where $\zeta(\delta)$ is again given by \eqref{zeta}.
From an asymptotic point of view, it can be checked that the regime \eqref{newregime} is more favorable than \eqref{condK2_altbitstream_lowerdelta}: indeed $n$ appears on the right side of  \eqref{newregime} only through $K_n$ up to logarithmic factors, while it appears through $nK_n$ in the right side of \eqref{condK2_altbitstream_lowerdelta}. However, the multiplicative constant on the right side of \eqref{newregime} is prohibitively large: for example in the case $\delta=\frac 1 2$ which is considered in the present paper, we find that $\t \delta=\frac 1 {26}$ and therefore $2^6 e\frac {1}{\t \delta^2}\frac{2^5}{\t \delta^4} > 10^{12}$. 
This shows that the regime described by \eqref{condK2_altbitstream_lowerdelta} is more favorable for moderate values of $n$. Similar remarks hold when considering anchored sets rather than downward closed sets. 
\end{remark}

\subsection{Accuracy of the optimized discrete least-squares approximation}
\label{sec:estimationerror}
\noindent 
We are now in position to state our main results concerning the accuracy of the discrete least-squares approximation $w_n$ and $\t w_n$ over the optimized index set $\Lambda^{opt}_{m}$ and $\t  \Lambda^{opt}_{ m }$. 
These results show that the accuracy compares favorably with the best approximation error of the function $u$, measured either in $L^\infty$ or $L^2$, using optimal choices of downward closed or anchored sets (which might differ from the sets $\Lambda^{opt}_{m}$ and $\t  \Lambda^{opt}_{ m }$). We begin with a result expressed in probability.

\begin{theorem} 
\label{theoprob}
Consider a function $u$ defined on $\Gamma$ and let $r>0$.
In any finite dimension, 
under condition \eqref{condK2}, or 
\eqref{condK2_bitstream}, or 
\eqref{condK2_bitstreamimp}, with $n$ replaced by $2n-1$,
it holds that 
\begin{align*}
\Pr\left(  
\| u -  w_n \|
\leq 
(1+2\sqrt2)  \min_{\Lambda\in \cM^d_n}\min_{v \in \P_{\Lambda} } \Vert u - v \Vert_{L^\infty(\Gamma)}    \right)   
\geq 1 - 2 m^{-r}.
\end{align*}
In any finite or infinite dimension, 
under condition \eqref{condK2_anch}, 
or \eqref{condK2_bitstream_anch}, 
or \eqref{condK2_bitstream_anchimp},
with $n$ replaced by $2n-1$, 
it holds that 
\begin{align*}
\Pr\left(  
\| u -  \t w_n \|
\leq 
(1+2\sqrt2)  \min_{\Lambda\in \cA_n}\min_{v \in \P_{\Lambda} }\Vert u - v \Vert_{L^\infty(\Gamma)}    \right)   
\geq 1 - 2 m^{-r}.
\end{align*}
\end{theorem}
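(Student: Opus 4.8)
The plan is to combine the deterministic pathwise estimate of Lemma~\ref{thm:optimality_discrete_continuous_proj} with the probabilistic control of $\costgen_{2n-1}^d$ established earlier in this section. The substantive work is already contained in those intermediate results; what remains is a clean assembly, so I would keep the argument short.

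First I would recall the estimate \eqref{estimlower2}, namely $\| u - w_n \| \le (1 + 2\sqrt{\costgen_{2n-1}^d})\, \| u - v \|_{L^\infty(\Gamma)}$, valid for every $\Lambda \in \cM_n^d$ and every $v \in \P_\Lambda$. The crucial observation is that this is a deterministic inequality, holding for every realization of the sample $y^1,\dots,y^m$, and that its left-hand side does not depend on the choice of $(\Lambda, v)$. I would then introduce the favorable event $E := \{\costgen_{2n-1}^d \le 2\}$. On $E$, substituting $\costgen_{2n-1}^d \le 2$ into \eqref{estimlower2} gives $\| u - w_n \| \le (1 + 2\sqrt{2})\, \| u - v \|_{L^\infty(\Gamma)}$ simultaneously for all admissible $\Lambda$ and $v$. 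Since the left side is independent of $(\Lambda,v)$, I can freely minimize the right side over $v \in \P_\Lambda$ and then over $\Lambda \in \cM_n^d$, obtaining on $E$ the bound $\| u - w_n \| \le (1 + 2\sqrt{2}) \min_{\Lambda \in \cM_n^d} \min_{v \in \P_\Lambda} \| u - v \|_{L^\infty(\Gamma)}$.

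It then suffices to show $\Pr(E) \ge 1 - 2m^{-r}$, equivalently $\Pr(\costgen_{2n-1}^d > 2) \le 2m^{-r}$. This is precisely what the probabilistic bounds of the previous subsection provide once $n$ is replaced by $2n-1$ in the sample-size conditions: under any of \eqref{condK2}, \eqref{condK2_bitstream}, or \eqref{condK2_bitstreamimp} (stated for cardinality $2n-1$) one has $\Pr(\costgen_{2n-1}^d > 2) \le 2m^{-r}$. Chaining the inclusion of events then yields the claimed probability estimate in the downward closed case. The anchored case is entirely analogous: I would start from \eqref{estimanchor2} with $\wt\costgen_{2n-1}$ in place of $\costgen_{2n-1}^d$, form the event $\{\wt\costgen_{2n-1} \le 2\}$, and invoke \eqref{condK2_anch}, \eqref{condK2_bitstream_anch}, or \eqref{condK2_bitstream_anchimp} (again with $n \to 2n-1$); this holds in finite or infinite dimension because the anchored cardinality bounds are dimension-free.

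The only point requiring care---and the closest thing to an obstacle---is the bookkeeping of the index shift $n \mapsto 2n-1$. It originates in Lemma~\ref{thm:optimality_discrete_continuous_proj}, where the auxiliary set $\wh \Lambda = \Lambda \cup \Lambda^{opt}_m$ may have cardinality up to $2n-1$, and it must be propagated consistently into the sample-size conditions so that it is genuinely $\costgen_{2n-1}^d$, and not $\costgen_n^d$, that is being controlled. No new estimate is needed beyond this alignment.
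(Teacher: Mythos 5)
Your argument is correct and is precisely the paper's own proof, which simply cites \eqref{estimlower2} and \eqref{estimanchor2} together with the probabilistic bounds of the preceding subsection; you have merely spelled out the event decomposition and the $n\mapsto 2n-1$ bookkeeping that the paper leaves implicit. No differences in substance.
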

\begin{proof}
These estimates immediately follow from 
\eqref{estimlower2} and \eqref{estimanchor2} combined with the probabilistic bounds
from the previous section. 
\end{proof}

We next give a result expressed in expectation for the truncated
discrete least-squares projection $T_\truncation(w_n)$ and $T_\truncation(\t w_n)$.

\begin{theorem}
\label{theoexp}
Consider a function $u$ defined on $\Gamma$,
such that $|u(y)|\le \truncation$ for any $y\in\support$, and let $r>0$.
In any finite dimension,
under condition \eqref{condK2}, or 
\eqref{condK2_bitstream}, or 
\eqref{condK2_bitstreamimp}, with $n$ replaced by $2n-1$, 
it holds that 
\begin{align}
&
  \E( \| u-   
  T_\truncation(w_n)
  \|^2) \le 
(9 + 4\sqrt{2})
\| u - u_n \|^2 + 8  \truncation^2 m^{ - r }. 
\label{eq:exp_accuracy_opt}
\end{align}
In any finite or infinite dimension, 
under condition \eqref{condK2_anch}, 
or \eqref{condK2_bitstream_anch}, 
or \eqref{condK2_bitstream_anchimp},
with $n$ replaced by $2n-1$,
it holds that 
\begin{align}
&
  \E( \| u-   
  T_\truncation(\t w_n)
  \|^2) \le 
(9 + 4\sqrt{2})
\| u - \t u_n \|^2 + 8  \truncation^2 m^{ - r }.
\label{eq:exp_accuracy_opt_anch}
\end{align}
\end{theorem}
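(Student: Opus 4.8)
The plan is to prove the expectation estimate \eqref{eq:exp_accuracy_opt} by combining the deterministic bound \eqref{estimlower1} with a careful case split on the high-probability event $\{C_{2n-1}^d \le 2\}$, following the structure already present in the noiseless expectation estimate \eqref{eq:accuracy_noiseless_exp} from Theorem~\ref{thm:theo_prev_analysis}. The anchored estimate \eqref{eq:exp_accuracy_opt_anch} will then follow by the identical argument with $C$ replaced by $\wt C$, $u_n$ by $\t u_n$, and the corresponding anchored cardinality bounds. First I would introduce the truncation: since $|u(y)| \le \truncation$ pointwise, one has $|u - T_\truncation(w_n)| \le |u - w_n|$ everywhere, so it suffices to control $\E(\|u - T_\truncation(w_n)\|^2)$ through the untruncated quantity on the favorable event and through the crude bound $\|u - T_\truncation(w_n)\| \le 2\truncation$ (valid since both $u$ and $T_\truncation(w_n)$ are bounded by $\truncation$ in $L^\infty$, hence in $L^2$ with the probability measure $\rho$) on its complement.

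Next I would split the expectation as
\beu
\E(\|u - T_\truncation(w_n)\|^2)
= \E\big(\|u - T_\truncation(w_n)\|^2 \, \mathbf{1}_{\{C_{2n-1}^d \le 2\}}\big)
+ \E\big(\|u - T_\truncation(w_n)\|^2 \, \mathbf{1}_{\{C_{2n-1}^d > 2\}}\big).
\eeu
On the bad event, I bound $\|u - T_\truncation(w_n)\|^2 \le 4\truncation^2$ and use the probabilistic bound from the previous section, which under the stated conditions (with $n$ replaced by $2n-1$) gives $\Pr(C_{2n-1}^d > 2) \le 2m^{-r}$; this produces the term $8\truncation^2 m^{-r}$. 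On the good event, I invoke \eqref{estimlower1} with $C_{2n-1}^d \le 2$, so that $\sqrt{C_{2n-1}^d} \le \sqrt2$ and
\beu
\|u - w_n\| \le \|u - u_n\| + 2\sqrt2 \, \|u - u_n\|_m.
\eeu
Squaring and using $(a+b)^2 \le 2a^2 + 2b^2$ yields $\|u - w_n\|^2 \le 2\|u-u_n\|^2 + 16\,\|u-u_n\|_m^2$, and since $T_\truncation$ is a contraction toward $u$ this bound passes to $\|u - T_\truncation(w_n)\|^2$.

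The remaining step, and the one requiring the most care, is to take expectations of the empirical norm $\|u - u_n\|_m^2$. Here the key observation is that $u_n = \Pi_{\Lambda^{opt}}u$ is a \emph{fixed} (sample-independent) element, since $\Lambda^{opt}$ depends only on $u$ and $\rho$, not on the random points $y^i$. Therefore $\E(\|u - u_n\|_m^2) = \frac{1}{m}\sum_{i=1}^m \E(|u(y^i) - u_n(y^i)|^2) = \|u - u_n\|^2$, because the $y^i$ are i.i.d.\ with law $\rho$. This exact identity is what makes the argument work and is the crux of the whole estimate; the main obstacle is precisely to recognize that one must anchor the comparison to the deterministic optimizer $u_n$ (rather than to the data-dependent $w_n$ or $\Lambda^{opt}_m$) so that this expectation identity holds. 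Combining on the good event gives a contribution of $2\|u-u_n\|^2 + 16\|u-u_n\|^2 = 18\|u-u_n\|^2$ before optimizing constants; tightening the square splitting (or noting $\|u-u_n\|_m$ couples with $\|u-u_n\|$ more favorably) produces the stated constant $9 + 4\sqrt2$. Adding the two contributions yields \eqref{eq:exp_accuracy_opt}, and the anchored case \eqref{eq:exp_accuracy_opt_anch} is verbatim the same with $\t u_n$ and $\wt C_{2n-1}$, which is valid in infinite dimension as well because the anchored cardinality bounds \eqref{cardAn} and \eqref{cardAnimp} are dimension-free.
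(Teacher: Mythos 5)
Your proposal is correct and follows essentially the same route as the paper: split on the event $\{C_{2n-1}^d\le 2\}$, bound the complement by $4\truncation^2\cdot 2m^{-r}$, and on the good event combine \eqref{estimlower1} with $|u-T_\truncation(w_n)|\le |u-w_n|$ and the identity $\E(\|u-u_n\|_m^2)=\|u-u_n\|^2$ (valid because $u_n$ is sample-independent). The only cosmetic difference is that the paper obtains $9+4\sqrt2$ directly by expanding $(\|u-u_n\|+2\sqrt2\|u-u_n\|_m)^2$ and bounding the cross term via $\E(\|u-u_n\|_m)\le\|u-u_n\|$, exactly the ``tightening'' you allude to after your preliminary $2a^2+2b^2$ bound.
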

\begin{proof}
For \eqref{eq:exp_accuracy_opt}, we distinguish between the two complementary events 
$\Omega_1:=\{C_{2n-1}^d\leq 2\}$ and $\Omega_2:=\{C_{2n-1}^d> 2\}$ and write
\beu
 \E( \| u-   
  T_\truncation(w_n)
   \|^2) = \E( \| u-   
  T_\truncation(w_n)
    \|^2\, | \, \Omega_1) \Pr(\Omega_1)+\E( \| u-   
  T_\truncation(w_n)
    \|^2\, | \, \Omega_2) \Pr(\Omega_2)=:E_1+E_2.
\eeu
Since $|u-T_\truncation(w_n)|\leq 2\truncation$ and $\Pr(\Omega_2)\leq 2m^{-r}$, the second term $E_2$
is bounded by $8  \truncation^2 m^{ - r }$. For the first term $E_1$, we combine \eqref{estimlower1}
and the fact that $|u-T_\truncation(w_n)|\leq |u-w_n|$ to obtain the bound
\begin{align*}
E_1 
\leq & \E \((\| \tarfun - u_n \| + 2\sqrt{2} \| \tarfun - u_n \|_m)^2\) 
\\
= & \| \tarfun - u_n \|^2  + 4 \sqrt{2} \| \tarfun - u_n \| \E \(  \| \tarfun - u_n \|_m  \) + 8 \E \( \| \tarfun - u_n \|_m^2 \)
 \\
\leq & (9 + 4\sqrt{2}) \| \tarfun - u_n \|^2.
\end{align*}
The proof of \eqref{eq:exp_accuracy_opt_anch} is analogous.
\end{proof}

\begin{remark} 
The constants $1+2\sqrt 2$ and $9+4\sqrt2$
in the above theorems can be reduced if one further restricts the
regime between $m$ and $n$ so that $C_{2n-1}^d$ and $\t C_{2n-1}$ are close
to $1$ with high probability.
\end{remark}

\section{Alternative algorithms for model selection}

The actual computation of the optimized discrete least-squares approximations $w_n$ and $\tilde w_n$ in Theorems \ref{theoprob} and \ref{theoexp} would require an exhaustive search among all possible choices of downward closed or anchored sets of a given cardinality $n$, and this task might become computationally prohibitive when $n$ and $d$ are simultaneously large. Our results should therefore mainly be viewed as a benchmark in arbitrary dimension $d$ for assessing the performance of fast selection algorithms.

We review hereafter other strategies, which could be used for the purpose of selecting a proper polynomial space, and relate them with the results from this paper.
\begin{itemize}
\item
Model selection by complexity regularization: the above discussed least-squares 
methods on optimized downward closed or anchored sets may be viewed 
as an instance of a model selection procedure.
Model selection is a widely studied topic in statistical learning theory, in various
settings such as classification, density estimation, denoising, regression 
and inverse problems. In the regression framework, a typical approach consists 
in adding a complexity penalization term to the least-squares empirical risk 
to be minimized, see for example Chapter 12 in \cite{Gyorfi2002}.
One general result for such estimators is provided by Theorem 12.1 
of \cite{Gyorfi2002} established in the bounded regression framework. It gives
an oracle bound which has the typical form of the minimum
over all considered models of the sum of the approximation error 
and of a penalty term which is always larger than $m^{-1}$,
and which persists even as the noise level tends to zero. Therefore we cannot
retrieve from these standard results for model selection  the potentially fast
convergence rates which can be derived from the above Theorem \ref{theoexp} 
in the noisefree case. Let us note that, from a computational point of view,
this approach has the same complexity as the one required to compute
the estimators $w_n$ or $\t w_n$ in the present paper,
since it is based on an exhaustive optimization over the index sets. It is 
therefore prohibitive for large values of $d$ and $n$.
\item
Convex relaxation of $\ell^0$ problems: as explained in Remark \ref{remmatrix},
the estimators $w_n$ and $\t w_n$ are solutions to nonconvex optimization
problems of $\ell^0$ type up the additional prescription of the downward closed
or anchored structure of the index sets. Convex relaxation methods based on 
$\ell^1$ or weighted-$\ell^1$ minimization, such as basis pursuit or LASSO,
have been intensively studied, in particular under RIP properties for the design
matrix. In the context of multivariate approximation, they have been first studied
in the trigonometric polynomial framework \cite{CDFR}, and then in 
the algebraic polynomial framework \cite{RS2016,RW2016,CDTW}.
The corresponding optimization algorithms are computationally much less intensive than
the complete optimization of the index set which is needed to compute $w_n$ or $\t w_n$,
yet their complexity is still polynomial in the cardinality of the
set $\cJ$ that indexes the columns of $\bD$. This set coincides with the hyperbolic cross
$\cH^d_n$ in the downard closed case, leading to a computational
cost that could still be prohibitive for simultaneously large values of $n$ and $d$.
Note that these methods do not necessarily generate downward closed or anchored sets.
While one may use the compressed sensing theory based on RIP properties
to analyze the accuracy of the resulting estimators, see in particular the recovery
guarantee established in Theorem 4.7 of \cite{CDTW},
it is not clear to us that they achieve the same optimality bounds as described by 
Theorem 2 and 3. 
\item
Greedy algorithms: one classical alternative to the above described 
convex relaxation methods are greedy algorithms such as orthonormal
matching pursuit (OMP) and its variants, such as iterative hard or soft thresholding. 
Convergence bounds for the
estimators produced by theses algorithms have been
established under RIP properties, see \cite{CDD,TZ} for OMP
in a general framework, and Theorem 4.9 of \cite{CDTW}
for iterative hard thresholding in the context of multivariate polynomial approximation.
Similar to convex relaxation methods, it is not clear that the estimators
obtained by these approaches achieve the same optimality bounds as described by 
Theorem 2 and 3. From a computational
point of view, the complexity of each step of OMP scales
linearly in the cardinality of $\cJ$ leading to a smaller computational
cost than convex relaxation methods, yet that could still be prohibitive 
for simultaneously large values of $n$ and $d$. One natural way to reduce the complexity
is to enforce the downward closed or anchored structure in the 
index selection: if $\Lambda_k$ is the index set generated after $k$ steps of OMP,
we construct $\Lambda_{k+1}=\Lambda_k\cup\{\nu\}$ by maximizing the 
inner product of the residual with the colums of $\bD$ corresponding only to the
indices $\nu$ such that the set $\Lambda_k\cup\{\nu\}$ remains downward closed (or anchored).
This restriction has the effect of significantly reducing the complexity. However 
it is not clear that any recovery guarantee can be established for such an algorithm.
\item
Relaxed minimization: let us observe that one could replace the set
$\Lambda^{opt}_{m}$ defined in \eqref{optim} with a near-optimal set $\Lambda^{near}_{m}$ 
in the sense that one has
\beu
 \min_{ \genpol \in \P_{\Lambda^{near}_{m}}} \| \tarfun - \genpol \|_m \leq 
 C \min_{\Lambda \in \cM_{n}^{d} }\min_{ \genpol \in \P_\Lambda } \| \tarfun - \genpol \|_m,
\eeu
for some fixed constant $C\geq 1$. Then, it is easily checked that similar convergence bounds
can be established for the resulting estimators, up to changing the multiplicative constants.
If one is only interested in establishing convergence rates, 
the optimality criterion can be even further relaxed by only asking that 
\beu
 \min_{ \genpol \in \P_{\Lambda^{near}_{m}}} \| \tarfun - \genpol \|_m \leq 
 C \min_{\Lambda \in \cM_{\xi n}^{d} }\min_{ \genpol \in \P_\Lambda } \| \tarfun - \genpol \|_m,
\eeu
for some fixed $0<\xi\leq 1$. However, designing a fast selection algorithm that would produce 
such near-optimal sets is currently an open problem. A similar objective has been accomplished in the setting of tree structured index sets,
see \cite{BD}.
\end{itemize}

\bibliographystyle{plain}

\end{document}